\newtheorem{thm}{Theorem}
\newtheorem{lemma}[thm]{Lemma}
\newtheorem{remark}[thm]{Remark}
\newtheorem{prop}[thm]{Proposition}
\newtheorem{cor}[thm]{Corollary}
\newcommand{\f}{\frac}
\renewcommand{\P}{P_{\lambda,\rho} }
\newcommand{\T}{\mathbb{T}}
\newcommand{\RR}{\mathfrak{R}}
\newcommand{\R}{\mathcal R}
\newcommand{\B}{\mathcal B}
\newcommand{\Cp}{CED$^+$ }
\newcommand{\LM}{\lambda_c^-}
\newcommand{\LP}{\lambda_c^+}
\newcommand{\ckl}{C_k^{\lambda,\rho}}
\title{Chase-escape with death on trees}
\author[]{Erin Beckman}
\address{Department of Mathematics and Statistics, Concordia University}
\email{ebeckman@math.duke.edu}
\author[]{Keisha Cook}
\address{Department of Mathematics, Tulane University}
\email{kcook7@tulane.edu}
\author[]{Nicole Eikmeier}
\address{Department of Computer Science, Grinnell College}
\email{eikmeier@grinnell.edu}
\author[]{Sarai Hernandez-Torres}
\address{Department of Mathematics, University of British Columbia}
\email{saraiht@math.ubc.ca}
\author[]{Matthew Junge}
\address{Department of Mathematics, Bard College}
\email{mjunge@bard.edu}
\begin{document}

\begin{abstract}
Chase-escape is a competitive growth process in which red particles spread to adjacent uncolored sites, while blue particles overtake adjacent red particles. We introduce the variant in which red particles die and describe the phase diagram for the resulting process on infinite $d$-ary trees. A novel connection to weighted Catalan numbers makes it possible to characterize the critical behavior.
\end{abstract}



\maketitle


\section{Introduction}

\emph{Chase-escape} (CE) is a model for predator-prey interactions in which expansion of predators relies on but also hinders the spread of prey. The spreading dynamics come from the Richardson growth model \cite{richardson}. Formally, the process takes place on a graph in which vertices are in one of the three states $\{w,r,b\}$. 
Adjacent vertices in states $(r,w)$ transition to $(r,r)$ according to a Poisson process with rate $\lambda$. Adjacent $(b,r)$ vertices transition to $(b,b)$ at rate $1$. 
The standard initial configuration has a single vertex in state $r$ with a vertex in state $b$ attached to it. All other sites are in state $w$. These dynamics can be thought of as prey $r$-particles ``escaping" to empty $w$-sites while being ``chased" and consumed by predator $b$-particles. We will refer to vertices in states $r,b,$ and $w$ as being red, blue, and white, respectively. 

We introduce the possibility that prey dies for reasons other than predation in a variant which we call \emph{chase-escape with death} (CED). This is CE with four states $\{ w,r,b,\dagger \}$ and the additional rule that vertices in state $r$ transition to state $\dagger$ at rate $\rho>0$. We call such vertices \emph{dead}. Dead sites cannot be reoccupied. 

\subsection{Results}
 
We study CED on the infinite rooted $d$-ary tree $\T_d$---the tree in which each vertex has $d\geq 2$ children---with an initial configuration that has the root red, one extra blue vertex $\mathfrak b$ attached to it, and the rest of the vertices white. 
Let $\R$ be the set of sites that are ever colored red. Similarly, let $\B$ be the set of sites that are ever colored blue. Denote the events that red and blue occupy infinitely many sites by
$A  = \{ |\R| = \infty\} \text{ and } B = \{ |\B| = \infty\}.$
Since $\B - \{\mathfrak b\} \subseteq \R$ deterministically, we also have $B \subseteq A$. 
We will typically write $P$ and $E$ in place of $\P$ and $E_{\lambda,\rho}$ for probability and expectation when the rates are understood to be fixed. There are three possible phases for CED:
\begin{itemize}
	\item \emph{Coexistence} $P(B) > 0$.
	\item  \emph{Escape} $P(A)>0$ and $P(B) =0$.
	\item \emph{Extinction} $P(A) = 0$. 

\end{itemize} 

For each fixed $d$ and $\lambda$, we are interested in whether or not these phases occur and how the process transitions between them as $\rho$ is varied.
Accordingly, we define the critical values
\begin{align}
\rho_c &= \rho_c(d,\lambda)= \inf \{ \rho \colon P_{\lambda, \rho} (B) =0\} \label{eq:rho_c}, \\
\rho_e &= \rho_e(d,\lambda) = \inf \{ \rho \colon P_{\lambda, \rho} (A) = 0\}.
\end{align}

One feature of CE, and likewise CED, that makes it difficult to study on graphs with cycles is that there is no known coupling that proves $P(B)$ increases with $\lambda$. On trees the coupling is clear, which makes analysis more tractable. It follows from \cite{bordenave2014extinction} that $P(B) >0$ in CE on a $d$-ary tree if and only if $\lambda > \LM$ with 
\begin{align}
\LM = 2d -1 - 2\sqrt{d^2 -d} \sim (4d)^{-1}. \label{eq:LM}
\end{align}
For CED on $\mathbb T_d$, $P(B)$ is no longer monotonic with $\lambda$. As $\lambda$ increases, blue falls further behind and so the intermediate red particles must live longer for coexistence to occur.
This lack of monotonicity makes $$\LP  = 2d -1 + 2 \sqrt{ d^2 - d} \sim 4d$$
also relevant, because we will see that when $\lambda\geq \lambda_c^+$, the gap between red and blue is so large that coexistence is impossible for any $\rho >0$.

\begin{figure} 
		\begin{tikzpicture}[scale = .7]
			   \begin{axis}[
			       axis y line = middle,
			       axis x line = middle,
			       ticks = none,
			       yticklabels={,,}
			       xticklabels={,,}
			       samples     = 200,
			       domain      = 0:20,
			       xmin = -1, xmax = 20,
			       ymin = 0, ymax = 80,
			       unbounded coords=jump,
			       x label style={at={(axis description cs:0.5,0)},anchor=north},
			       y label style={at={(axis description cs:0,.5)},anchor=south},
			       xlabel = {$\lambda$},
			       ylabel = {$\rho$}
			     ]
			     \addplot[black, thick, dashed, mark=none, domain = -3:26] {(-(1/300)*(x- 18)^3*(x- .4 )^3)^(1/3)};
				\addplot[black, thick, mark=none] {x*(5-1)};
			   \end{axis}
			   \node at (.6,-.25) {\tiny $\lambda_c^-$};
			   \node at (6.1,-.25) {\tiny $\lambda_c^+$};
			   \node[anchor = west] at (2,3.8) {\small extinction};
			   \node[anchor = west] at (2,1.25) {\small escape};
			   \node[anchor = west] at (2,.4) {\small coexistence};
			\end{tikzpicture}
\qquad 				\includegraphics[width = 6.65 cm]{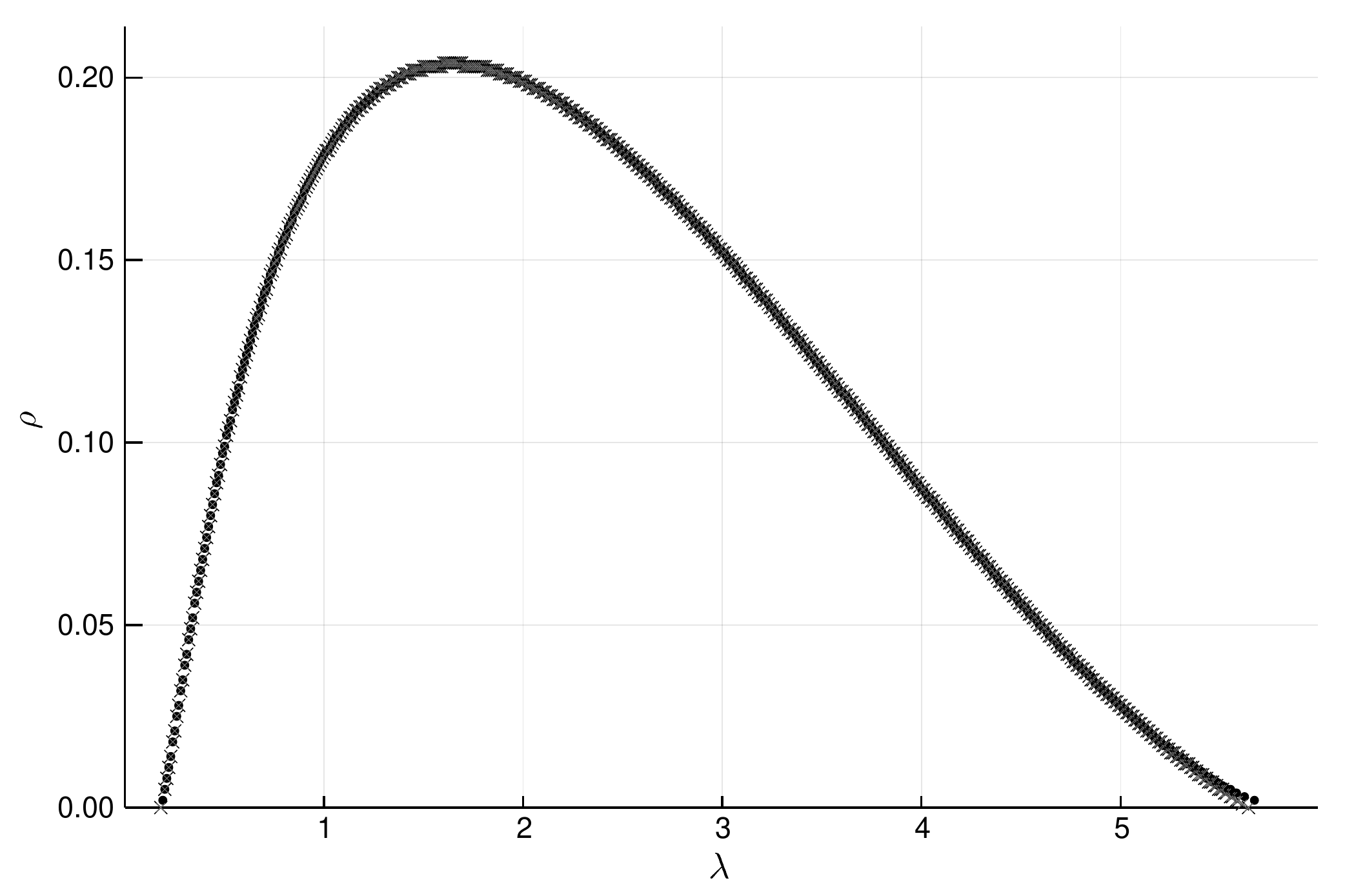}
		\caption{ (Left) The phase diagram for fixed $d$. The dashed line is $\rho_c$ and the solid line $\rho_e$. (Right) A rigorous approximation of $\rho_c$ when $d=2$. The approximations for larger $d$ have a similar shape.
		} \label{fig:pd}
\end{figure}

Suppressing the dependence on $d$, let $\Lambda = (\LM,\LP)$. Unless stated otherwise, we will assume that $d \geq 2$ is fixed. Our first result describes the phase structure of CED (see Figure~\ref{fig:pd}). 
\begin{thm}  \thlabel{thm:main} Fix  $\lambda > 0$. 
\begin{enumerate}[label = (\roman*)]
	\item If $\lambda \in \Lambda$, then $0 < \rho_c < \rho_e=\lambda(d-1)$ with escape occurring at $\rho = \rho_c$, and extinction at $\rho = \rho_e$. 	
	\item If $\lambda \notin \Lambda$, then $0=\rho_c < \rho_e = \lambda(d-1)$ with extinction occurring at $\rho = \rho_e$.
\end{enumerate} 
\end{thm}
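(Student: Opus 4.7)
The plan is to separate the two thresholds: $\rho_e=\lambda(d-1)$ will come from analyzing the red subprocess alone, while $\rho_c$ and the $\lambda\in\Lambda$ versus $\lambda\notin\Lambda$ dichotomy require understanding the chase dynamics through the Catalan-number machinery advertised in the abstract.

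For the extinction threshold, I would first identify the \emph{red-only process} (CED with $\mathfrak b$ removed) as a Galton--Watson tree on $\T_d$ in which each red vertex infects each of its $d$ white children with probability $p=\lambda/(\lambda+\rho)$, namely the probability that its $\operatorname{Exp}(\lambda)$ edge clock rings before its $\operatorname{Exp}(\rho)$ death clock. The offspring mean $dp=d\lambda/(\lambda+\rho)$ exceeds $1$ iff $\rho<\lambda(d-1)$ and equals $1$ at $\rho=\lambda(d-1)$, where the process goes extinct almost surely. A natural coupling embeds $\R$ inside the set of sites ever red in the corresponding red-only process, so $P(A)=0$ for every $\rho\geq\lambda(d-1)$, which gives $\rho_e\leq\lambda(d-1)$ and extinction at $\rho_e$. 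For the reverse inequality, when $\rho<\lambda(d-1)$ one shows $P(A)>0$ by a \emph{blue-free escape}: with positive probability some child of the root turns red before $\mathfrak b$ eats the root, and iterating down the tree one isolates a subtree reached by red strictly before blue could get there, which then evolves as a supercritical red-only process and survives.

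For the coexistence threshold I would reduce the problem to a one-dimensional chase on a single ray of $\T_d$. On such a ray, red advances rightward at rate $\lambda$, each red vertex on the ray dies at rate $\rho$, and blue advances rightward at rate $1$ (provided the next red vertex has not died first). Encoding the sequence of events (red advance, blue advance, red death at the front) as a lattice path, blue successfully chases red across the first $n$ vertices iff the associated path stays in an appropriate cone. Summing the probabilities of such paths produces a weighted Catalan-number generating series for $\pi(\lambda,\rho)$, the probability that blue chases red along an entire infinite ray. A Galton--Watson argument on $\T_d$ then yields $P(B)>0$ iff a branching-type inequality involving $\pi(\lambda,\rho)$ and $d$ holds, giving an explicit equation for the critical curve $\rho_c(d,\lambda)$.

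Analyzing that equation: at $\rho=0$ it should specialize to Bordenave's CE criterion $\lambda>\LM$, so for $\lambda\leq\LM$ it has no positive solution and $\rho_c=0$; the second root $\LP$ of the same quadratic reappears because the Catalan series produces a quadratic fixed-point equation whose discriminant vanishes precisely at $\lambda\in\{\LM,\LP\}$, and this forces $\rho_c=0$ also when $\lambda\geq\LP$. For $\lambda\in\Lambda$ the equation has a strictly positive solution $\rho_c$, and since coexistence is strictly more demanding than red survival this solution satisfies $\rho_c<\rho_e$. Continuity of $\pi$ in $\rho$ then gives $P(B)=0$ at $\rho=\rho_c$, so escape holds at the critical value. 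The main obstacle I expect is the combinatorial reduction itself: choosing the right encoding so that the single-ray chase genuinely becomes a weighted Catalan sum, and then solving the resulting generating-function identity. Once the fixed-point equation is quadratic (as the symmetry between $\LM$ and $\LP$ strongly suggests), identifying which root is the physical solution and verifying the monotonicity in $\rho$ needed for the continuity step is the most delicate algebraic point.
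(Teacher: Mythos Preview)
Your treatment of $\rho_e$ matches the paper's exactly: red-only is a Galton--Watson process with mean $d\lambda/(\lambda+\rho)$, and the blue-free escape event (a child of the root turns red, then the root dies, isolating a supercritical red subtree) is precisely the argument the paper uses for $P(A)>0$ when $\rho<\lambda(d-1)$.

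The coexistence part, however, rests on a misconception. You anticipate that ``the Catalan series produces a quadratic fixed-point equation whose discriminant vanishes precisely at $\lambda\in\{\LM,\LP\}$.'' That is only true at $\rho=0$, where all step weights coincide and the ordinary Catalan generating function satisfies $\psi=1/(1-x\psi)$. For $\rho>0$ the weights $a_j=\lambda/[(1+\lambda+(j+1)\rho)(1+\lambda+(j+2)\rho)]$ genuinely depend on the height $j$, so the generating function $g(z)=\sum_k C_k^{\lambda,\rho}z^k$ is a bona fide infinite continued fraction $K[1,a_0z,a_1z,\ldots]$ and satisfies no algebraic equation. There is no closed formula for $\rho_c$; the paper instead proves (via the Worpitzky circle theorem) that $g$ extends to a meromorphic function on $\mathbb C$, shows the radius of convergence $M(\rho)$ is continuous and strictly increasing, and identifies $\rho_c$ as the unique $\rho$ with $M(\rho)=d$. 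The endpoints $\LM,\LP$ enter only through the explicit value $M(0)=(1+\lambda)^2/(4\lambda)$, which satisfies $M(0)<d$ iff $\lambda\in\Lambda$; strict monotonicity of $M$ then handles $\lambda\notin\Lambda$.

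Two further points your outline does not cover. First, your quantity $\pi(\lambda,\rho)$, ``the probability that blue chases red along an entire infinite ray,'' is identically zero for every $\rho>0$, so the branching criterion cannot be of the form $d\cdot\pi>1$; the correct object is the full series $g(d)=\sum_k d^k P(\RR_k)$ of renewal probabilities, and the argument that $P(B)=0$ at $\rho=\rho_c$ is not a continuity statement but a separate lemma: a Borel--Cantelli bound rules out blue surviving while staying far behind red, and for each bounded gap $m$ the ``$m$-renewal'' branching process is dominated by the critical $m=1$ process (criticality at $\rho_c$ itself coming from a Fatou argument). Second, the strict inequality $\rho_c<\rho_e$ is not automatic from ``coexistence is more demanding than red survival''; the paper establishes it via an explicit upper bound $\rho_c\le f_2(\lambda)$ obtained from a sawtooth-path estimate, together with an algebraic check that $f_2(\lambda)<\lambda(d-1)$.
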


Our next result concerns the behavior of $E|\B|$ at and above criticality.

 \begin{thm} \thlabel{cor:EB}
Fix $\lambda \in \Lambda$. 
\begin{enumerate}[label = (\roman*)]
	\item \label{>} If $\rho > \rho_c$, then $E |\B|  < \infty.$
	\item \label{=} If $\rho = \rho_c$, then $E |\B| = \infty$.
\end{enumerate}
\end{thm}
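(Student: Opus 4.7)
The plan is to use the representation of $E|\B|$ derived earlier in the paper via the weighted Catalan numbers $\ckl$. By summing over depth and using tree symmetry,
\begin{equation*}
E|\B| = 1 + \sum_{k \geq 1} d^k p_k(\lambda, \rho),
\end{equation*}
where $p_k$ is the probability that a fixed vertex at depth $k$ is ever colored blue; because blue propagation along a single root-to-leaf path is unaffected by what happens on the other branches, $p_k$ is precisely the single-path quantity expressed through the $\ckl$. The critical value $\rho_c$ is characterized as the $\rho$ at which the radius of convergence of the generating function $F(z) = \sum_k p_k z^k$ equals $d$: since $p_k$ is strictly monotone decreasing in $\rho$ under the natural coupling of the death clocks (larger $\rho$ brings death strictly earlier), the radius of convergence is strictly increasing in $\rho$.

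For part (i), if $\rho > \rho_c$ then the radius of convergence of $F$ strictly exceeds $d$, so $F(d) < \infty$ and hence $E|\B| < \infty$.

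For part (ii), at $\rho = \rho_c$ the radius of convergence equals $d$, and the task is to determine whether the boundary value $F(d)$ is finite or infinite. I would extract the precise singularity type of $F$ at $z=d$ from the algebraic functional equation that the weighted Catalan generating function satisfies (a $\lambda,\rho$-weighted analogue of the classical relation $F = 1 + zF^2$). The main obstacle is that the critical condition defining $\rho_c$ must produce a singularity \emph{stronger} than the generic square-root type, since a square-root singularity would yield $p_k \sim c\, d^{-k}\, k^{-3/2}$ and hence $E|\B| < \infty$, contradicting the claim. I expect the singularity at $\rho = \rho_c$ to be of type $(d-z)^{-1/2}$ or logarithmic, arising from the simultaneous vanishing of the discriminant of the quadratic together with a factor produced by the $\lambda,\rho$-weights. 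An alternative route that avoids the fine singularity analysis is to identify $d^k p_k$ with the expected population at generation $k$ of a multi-type branching process that tracks blue propagation along the tree (types encoding the current red--blue delay $\alpha_k = T^b_k - T^r_k$), to show that $\rho_c$ is precisely the parameter making the mean operator have spectral radius $1$, and then to conclude divergence of $\sum_k d^k p_k$ by the standard observation that at a critical Perron eigenvalue the expected level populations do not decay fast enough to be summable.
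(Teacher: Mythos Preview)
Your proposal has two genuine gaps.

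\textbf{Part (i): the identification of $p_k$ with $C_k^{\lambda,\rho}$ is incorrect.} The weighted Catalan number $C_k^{\lambda,\rho}$ equals $P(\RR_k)$, the probability that on the line there is a \emph{renewal} at $k$ (blue at $k$, red at $k+1$, everything beyond white). The probability $p_k$ that blue ever reaches depth $k$ is $P(Y\ge k)$, which is strictly larger: blue can reach $k$ with red far ahead, or after red has died. Thus the generating function whose radius of convergence is identified as $M$ in the paper is $g(z)=\sum C_k^{\lambda,\rho} z^k$, not your $F(z)=\sum p_k z^k$. To conclude $E|\B|<\infty$ from $M>d$ you need an \emph{upper} bound on $p_k$ in terms of $C_k^{\lambda,\rho}$. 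The paper supplies this as a separate lemma: $P(Y\ge k)\le Ck^{1+\lambda/\rho}\,C_k^{\lambda,\rho}$, proven by a path-surgery argument that injects arbitrary blue-reaching trajectories into renewal trajectories. Without such a comparison, your part (i) does not close.

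\textbf{Part (ii): there is no quadratic functional equation.} Because the weights $u(j),v(j)$ depend on the height $j$, the generating function $g$ does \emph{not} satisfy an analogue of $F=1+zF^2$; the first-return decomposition instead gives an infinite ladder $g_j(z)=\bigl(1-a_j z\,g_{j+1}(z)\bigr)^{-1}$, i.e.\ the continued fraction \eqref{eq:f}. So you cannot read off a discriminant. The paper's route is exactly the one your instinct pointed toward but with the right tool: since $a_j\to 0$, Worpitzky's theorem makes each tail $g_j$ analytic, and hence $f=g$ is \emph{meromorphic} on all of $\mathbb{C}$. Pringsheim then forces the singularity at $z=M=d$ to be a pole, so $g(d)=\lim_{x\uparrow d}g(x)=\infty$; combined with the easy inequality $E|\B|\ge g(d)$ (every renewal vertex is blue) this gives (ii). Your alternative spectral-radius branching argument is not obviously workable either, since the ``type'' (red--blue gap) lives in an infinite state space and at criticality the mean matrix need not have a summable Perron eigenvector.
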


\thref{cor:EB} \ref{=} is particularly striking because it is known that $E |\B| < \infty $  in CE with $\lambda = \lambda_c^-$ (see \cite[Theorem 1.4]{bordenave2014extinction}). Hence the introduction of death changes the critical behavior. The reason for this comes down to singularity analysis of a generating function associated to CED and is discussed in more detail in \thref{rem:EZ}.

We prove three further results about $\rho_c$ concerning its asymptotic growth in $d$, smoothness in $\lambda$, and approximating its value (see Figure \ref{fig:pd}).

\begin{thm} \thlabel{cor:growth}
Fix $\lambda>0$, $c < \sqrt{\lambda /2}$, and $C > \sqrt{2 \lambda}$. For all $d$ large enough
$$c \sqrt{ d} \leq \rho_c \leq C\sqrt{d}.$$
\end{thm}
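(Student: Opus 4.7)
The plan is to exploit the characterization of $\rho_c$ developed earlier in the paper, where $E|\B|$ is expressed via a generating function $F(z;\lambda,\rho) = \sum_{k\geq 0} \ckl\, z^k$ whose coefficients are weighted Catalan numbers. I expect that $\rho_c(d,\lambda)$ is determined by $L(\lambda,\rho_c) = 1/d$, with $L(\lambda,\rho) := \limsup_{k\to\infty}(\ckl)^{1/k}$ the exponential growth rate of the coefficients, so the problem reduces to estimating $L$ in the joint regime $d,\rho\to\infty$ with $\rho \sim \sqrt{\lambda d}$.

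For the upper bound $\rho_c\leq C\sqrt{d}$ with $C>\sqrt{2\lambda}$, a first-moment argument via \thref{cor:EB} suffices: showing $E|\B|<\infty$ at $\rho = C\sqrt{d}$ forces $P(B)=0$ and hence $\rho\geq\rho_c$. The goal is to establish $\ckl\leq h(\lambda,\rho)^k\,C_k$ where $C_k^{1/k}\to 4$ is the classical Catalan asymptotic and $h(\lambda,\rho)\leq \lambda/(2\rho^2)\,(1+o(1))$ as $\rho\to\infty$; this expansion should fall out of the Poisson structure of edge-traversal times, with each edge contributing a factor of $\lambda$ balanced against a death-weight of order $\rho^{-2}$. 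Summability of $\sum_k \ckl d^k$ then holds whenever $4d\cdot h(\lambda,\rho)<1$, which rearranges to $\rho > \sqrt{2\lambda d}(1+o(1))$, completing the bound.

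For the lower bound $\rho_c\geq c\sqrt{d}$ with $c<\sqrt{\lambda/2}$, a parallel strategy should yield $\ckl\geq \tilde h(\lambda,\rho)^k$ with $\tilde h(\lambda,\rho)\geq 2\lambda/\rho^2\,(1-o(1))$; divergence of $\sum_k\ckl d^k$ then forces $E|\B|=\infty$, and by the contrapositive of the same finiteness dichotomy $\rho\leq\rho_c$. The required lower bound on $\ckl$ should come from restricting to a dominant sub-family of weighted Dyck-path configurations whose contribution alone achieves the claimed growth rate, avoiding the need for sharp estimates on the full family.

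The main obstacle is the joint asymptotic analysis of $\ckl$ as $\rho,d\to\infty$. Extracting the leading $\lambda/\rho^2$ scaling from the (presumably algebraic) equation satisfied by $F(z;\lambda,\rho)$ requires careful singularity analysis, and the factor-of-four gap between the upper and lower constants $\sqrt{2\lambda}$ and $\sqrt{\lambda/2}$ merely reflects slack in two-sided Catalan-number estimates rather than a sharp singularity computation, which is what keeps the calculation tractable at this level of precision.
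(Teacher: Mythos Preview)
Your plan is essentially the paper's: bound the weighted Catalan numbers by $\ckl \le C_k\, a_0^{\,k}$ above (since the weights $a_j$ are decreasing in $j$, the sawtooth path maximizes the per-step weight) and by $\ckl \ge a_0^{\,k}$ below (the single sawtooth path), where $a_0 = \lambda/[(1+\lambda+\rho)(1+\lambda+2\rho)] \sim \lambda/(2\rho^2)$ as $\rho\to\infty$; then convergence of $\sum C_k (a_0 d)^k$ when $4a_0 d<1$ gives $\rho_c \le \sqrt{2\lambda d}\,(1+o(1))$, and divergence of $\sum (a_0 d)^k$ when $a_0 d>1$ gives $\rho_c \ge \sqrt{\lambda d/2}\,(1+o(1))$.

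Two small points. First, your stated lower-bound constant $\tilde h \ge 2\lambda/\rho^2$ is inconsistent with the factor-of-four gap you correctly identify later: with that constant the lower bound would match the upper bound exactly. The achievable bound from a single path is $\tilde h \sim \lambda/(2\rho^2)$, which is what yields the constant $\sqrt{\lambda/2}$. Second, no singularity analysis of $F$ is needed here; the elementary monotonicity $a_j \le a_0$ already gives the pointwise bound $\omega(\gamma)\le a_0^{\,k}$ for every Dyck path of length $2k$, which is all the upper bound requires. Finally, be careful about invoking \thref{cor:EB} (and implicitly \thref{thm:main}) since the paper's proof of \thref{thm:main} cites the present theorem; route instead through the underlying characterization $M(\rho_c)=d$ and the strict monotonicity of $M$, as the paper does.
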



\begin{thm} \thlabel{thm:smooth}
The function $\rho_c$ is infinitely differentiable in $\lambda \in \Lambda$. 
\end{thm}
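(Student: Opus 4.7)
The plan is to pin down $\rho_c(\lambda)$ as the solution of an implicit equation $F(\lambda,\rho)=0$ coming from the weighted-Catalan generating function advertised in the abstract, and then invoke the implicit function theorem.

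First, I would use the paper's generating-function setup to obtain a closed-form characterization of the critical curve. In CE-type problems on $\T_d$, coexistence is equivalent to finiteness of a sum which, after factoring out the $k$th Catalan number $C_k$, takes the shape
\begin{equation*}
\sum_{k\geq 0} C_k\, x(\lambda,\rho)^{k}
\end{equation*}
for an explicit rational function $x(\lambda,\rho)$ built from $\lambda$, $\rho$ and $d$. Using the closed form $\sum_{k} C_k t^{k} = (1-\sqrt{1-4t})/(2t)$, convergence is equivalent to $4x(\lambda,\rho)\leq 1$, so $\rho_c$ is determined by
\begin{equation*}
F(\lambda,\rho_c):=1-4\, x(\lambda,\rho_c)=0,
\end{equation*}
where the agreement between this algebraic curve and the probabilistic critical value is supplied by \thref{cor:EB}.

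Second, since $x$ is rational, $F$ is real-analytic in $(\lambda,\rho)$ on a neighborhood of the critical curve. To apply the implicit function theorem it suffices to check $\partial_\rho F(\lambda,\rho_c(\lambda))\neq 0$ for every $\lambda\in\Lambda$. Qualitatively this must hold because raising $\rho$ kills red particles faster and strictly decreases the effective weight $x(\lambda,\rho)$; I would confirm it by direct differentiation of the closed form of $x$, which in these CE-type problems is typically a ratio of simple polynomials in $\lambda$ and in $\lambda+\rho$. The implicit function theorem then yields a smooth---in fact real-analytic---branch $\lambda\mapsto\rho_c(\lambda)$ on $\Lambda$, which is more than the claimed $C^{\infty}$ regularity.

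The main obstacle will be extracting a sufficiently clean closed form for $x(\lambda,\rho)$ and confirming non-degeneracy of $\partial_\rho F$ uniformly on $\Lambda$: the CED generating function is genuinely more delicate than the CE one since $\rho$ enters every factor accounting for the lifetime of a red particle. The endpoints of $\Lambda$ are naturally excluded from the statement: by \thref{thm:main}(ii), $\rho_c=0$ at $\LM$ and $\LP$, and geometrically one expects the critical curve to meet the $\lambda$-axis tangentially there, so that $\partial_\rho F$ vanishes and the implicit function theorem breaks down precisely at $\partial\Lambda$.
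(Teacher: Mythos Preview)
Your overall strategy---define $F(\lambda,\rho)$ so that $F(\lambda,\rho_c)=0$, check $\partial_\rho F\neq 0$, and invoke the implicit function theorem---is exactly what the paper does. But the specific $F$ you propose does not exist. The decomposition $C_k^{\lambda,\rho}=C_k\,x(\lambda,\rho)^k$ holds only when $\rho=0$, where the step weights $u(j),v(j)$ in \eqref{eq:ud} are independent of the height $j$; this is precisely the CE case. Once $\rho>0$, the weights genuinely depend on $j$, the weighted Catalan numbers do not factor through the ordinary Catalan numbers, and there is no rational $x(\lambda,\rho)$ that puts $g(d)$ into the form $\sum_k C_k x^k$. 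So the equation $1-4x(\lambda,\rho)=0$ never materializes, and the ``clean closed form'' you flag as an obstacle is in fact unobtainable, not merely delicate. (The paper even exploits this distinction in \thref{rem:EZ}: the $\rho=0$ generating function has a branch cut, while for $\rho>0$ it is meromorphic.)

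The paper's substitute for your algebraic $F$ is the infinite continued fraction $K(\lambda,\rho):=K[b_0,b_1,\ldots]$ with $b_j=a_jd$, and it shows $\rho_c$ is characterized by $K(\lambda,\rho_c)=1$. The nontrivial analytic input---which your proposal is missing---is that this continued fraction defines a real-analytic function of $(\lambda,\rho)$: since $a_j\to 0$, a tail of the continued fraction falls under the Worpitzky circle theorem and is a uniform limit of analytic functions, so the full $K$ is a ratio of analytic functions (cf.\ \thref{lem:meromorphic}). Monotonicity of $K$ in $\rho$ gives $\partial_\rho K\neq 0$, and then the implicit function theorem finishes exactly as you outline.
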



\begin{thm} \thlabel{cor:algorithm}
	For $\lambda\in \Lambda$ and $\rho \neq \rho_c$, there is a finite algorithm to determine if $\rho < \rho_c$ or if $\rho > \rho_c$. 
\end{thm}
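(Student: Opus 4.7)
The plan is to exploit the weighted Catalan number representation advertised in the abstract. I expect that there is a series identity of the form
$$F(\lambda,\rho) \;=\; \sum_{k\ge 1} \ckl,$$
whose terms $\ckl$ are computable rational expressions in $\lambda$ and $\rho$, and which converges (for fixed $\lambda\in\Lambda$) iff $\rho>\rho_c$; by \thref{cor:EB} the natural candidate is $F(\lambda,\rho)=E|\B|$, making $\rho_c$ the boundary of convergence of $F(\lambda,\cdot)$.

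Given such a representation, the algorithm will have two halves running in parallel. The first produces a certificate for $\rho>\rho_c$: using the singularity analysis of $F$ (of the kind presumably carried out in the proof of \thref{cor:EB}), I would derive an explicit, computable tail bound $T_N(\lambda,\rho)$ satisfying $\sum_{k>N}\ckl \le T_N(\lambda,\rho)$ with $T_N\to 0$ whenever $\rho>\rho_c$. The second produces a certificate for $\rho<\rho_c$ by computing the monotone partial sums $S_N(\lambda,\rho)=\sum_{k=1}^N \ckl$ and comparing them to an increasing threshold $M_N\to\infty$. For any $\rho\neq\rho_c$, exactly one of the events ``$S_N+T_N$ is bounded in $N$'' or ``$S_N>M_N$'' occurs at some finite $N$, at which point the algorithm halts and returns the correct side of $\rho_c$.

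The main obstacle is making the tail bound $T_N$ quantitatively sharp enough: one needs $T_N(\lambda,\rho)\to 0$ as soon as $\rho>\rho_c$, not merely for $\rho$ bounded away from $\rho_c$, since otherwise the subcritical half of the algorithm could loop forever on inputs very close to criticality. This amounts to an effective (rather than purely asymptotic) version of the statement that the radius of convergence of $F(\lambda,\cdot)$ equals $\rho_c$, and is most naturally obtained via a functional equation for $F$ iterated as a monotone fixed-point recursion whose truncation errors can be controlled term by term. Computability of each $\ckl$ and monotonicity of $F$ in $\rho$ should be immediate from the definitions, so once the tail estimate is in place the halting argument above goes through.
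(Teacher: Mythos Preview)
Your high-level strategy---run two certificates in parallel, one for each side of $\rho_c$---is right, and you correctly flag the tail bound $T_N$ as the crux. But the proposal leaves that gap unfilled, and the halting logic as written is not sound. The condition ``$S_N+T_N$ is bounded in $N$'' is a property of an entire sequence, not something decidable at any finite $N$; and the divergence test ``$S_N>M_N$'' can trigger falsely when $\rho>\rho_c$ but $\rho$ is only slightly above $\rho_c$, since then $g(d)$ is finite but arbitrarily large and nothing in your scheme prevents the $\rho<\rho_c$ half from halting before the convergence half has certified anything. For the parallel scheme to work, the two certificates must be mutually exclusive, and you have not arranged this. A smaller point: the relevant series is $g(d)=\sum_k \ckl\, d^k$, not $\sum_k \ckl$ (the latter always converges, since each $\ckl$ is a probability).

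The paper resolves both issues through the continued fraction representation $g(z)=K[1,a_0z,a_1z,\ldots]$ with $a_j=u(j)v(j)$; this is precisely the ``functional equation iterated as a monotone fixed-point recursion'' you gesture toward, made concrete. Two truncations at level $m$ yield finitely checkable \emph{equivalences} (not mere sufficient conditions): setting $b_j=a_jd$, one has $\rho<\rho_c$ iff $K[b_i,\ldots,b_m]>1$ for some $m\ge1$ and $0\le i\le m$ (obtained by zeroing all weights above height $m$, giving a rational lower bound $\hat g_m\le g$), and $\rho>\rho_c$ iff $b_m<1/4$ and the finite fraction $K_m(d)=K[1,b_0,\ldots,b_{m-2},b_{m-1}\psi(b_m)]$ has every partial denominator strictly below $1$ (obtained by freezing all weights above height $m$, so the tail becomes the closed-form Catalan generating function $\psi$). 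Because these are if-and-only-if statements, the two conditions are automatically mutually exclusive, and for any $\rho\ne\rho_c$ exactly one holds at some finite $m$. The algorithm just increments $m$ and tests both; no partial sums $S_N$ are ever computed.
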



\subsection{Proof methods} \label{sec:methods}

Theorems \ref{thm:main} and \ref{cor:growth} are proven by relating coexistence in CED to the survival of an embedded branching process that renews each time blue is within distance one of the farthest red. Describing the branching process comes down to understanding how CED behaves on the nonnegative integers with 0 initially blue and 1 initially red. 
In particular, we are interested in the event $\RR_k$ that at some time $k$ is blue, $k+1$ is red, and all further sites are white.

The probability of $\RR_k$ can be expressed as a weighted Catalan number. 
These are specified by non-negative weights $u(j)$ and $v(j)$ for $j\geq 0$. Given a lattice path $\gamma$ consisting of unit rise and fall steps, each rise step from $(x,j)$ to $(x+1,j+1)$ has weight $u(j)$, while fall steps from $(x,j+1)$ to $(x+1,j)$ has weight $v(j)$. The \emph{weight} $\omega(\gamma)$ of a Dyck path $\gamma$ is defined to be the product of the rise and fall step weights along $\gamma$. 

The corresponding \emph{weighted Catalan number} is 
$C_k^{u,v} = \sum \omega(\gamma)$
where the sum is over all Dyck paths $\gamma$ of length $2k$ (nonnegative paths starting at $(0,0)$ consisting of $k$ rise and $k$ fall steps). See Figure \ref{fig:weighted} for an example. For CED, we define $C^{\lambda,\rho}_k$ as the weighted Catalan number with weights
\begin{align}
u(j) =	\f{\lambda}{1+\lambda + (j+1) \rho} \text{ and } v(j) = \f{1}{1+\lambda + (j+2) \rho} \label{eq:ud}.
\end{align}
At \eqref{eq:R_k=C_k} we explain why $P(\RR_k) = C^{\lambda,\rho}_k$. 
 
Returning to CED on $\mathbb T_d$, self-similarity ensures that the expected number of renewals in the embedded branching process is equal to the generating function $g(z) = \sum_{k=0}^\infty C_k^{\lambda, \rho} z^k$ evaluated at $z=d$. We prove in \thref{cor:rho_d=rho_c} that $\rho_c$ is the value at which the radius of convergence of $g$ is equal to $d$. We characterize the radius of convergence using a  continued fraction representation of $g$, which leads to the proofs of Theorems \ref{cor:EB}, \ref{thm:smooth}, and \ref{cor:algorithm}.

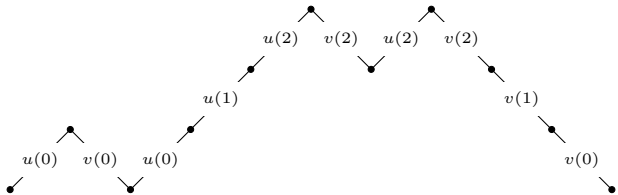
\begin{figure}
	\begin{tikzpicture}[scale = .8]
			\draw (0,0) -- (1,1) node [midway,fill=white] {\tiny $u(0)$} -- (2,0)  node [midway,fill=white] {\tiny $v(0)$} -- (3,1) node [midway,fill=white] {\tiny $u(0)$} -- (4,2) node [midway,fill=white] {\tiny $u(1)$}-- (5,3) node [midway,fill=white] {\tiny $u(2)$} -- (6,2) node [midway,fill=white] {\tiny $v(2)$} --  (7,3) node [midway,fill=white] {\tiny $u(2)$} -- (8,2) node [midway,fill=white] {\tiny $v(2)$} -- (9,1) node [midway,fill=white] {\tiny $v(1)$} -- (10,0) node [midway,fill=white] {\tiny $v(0)$};
			
	\draw[fill] (0,0)  circle [radius=0.05]
            (1,1)  circle [radius=0.05]
            (2,0) circle [radius=0.05]
            (3,1) circle [radius=0.05]
            (4,2) circle [radius=0.05]
            (5,3) circle [radius=0.05]
            (6,2) circle [radius=0.05]
            (7,3) circle [radius=0.05]
            (8,2) circle [radius=0.05]
            (9,1) circle [radius=0.05]
            (10,0) circle [radius=0.05]
            ;

	\end{tikzpicture}

\caption{A Dyck path of length $10$ with weight $u(0)^2 v(0)^2 u(1)v(1) u(2)^2 v(2)^2.$}\label{fig:weighted}
\end{figure}

\subsection{History and context} \label{sec:history}
The forebearer of CE is the Richardson growth model for the spread of a single species \cite{richardson}. In our notation, this process corresponds to the setting with $\lambda =1$, $\rho=0$, and no blue particles. 
Many basic questions remain unanswered for the Richardson growth model on the integer lattice \cite{fpp}, as well as for the competitive version \cite{pleasures}.

James Martin conjectured that coexistence occurs in CE on lattices when red spreads at a slower rate than blue. Simulation evidence from  Tang, Kordzakhia, Lalley in \cite{si} suggested that, on the two-dimensional lattice, red and blue coexist with positive probability so long as $\lambda \geq 1/2$. Durrett, Junge, and Tang proved in \cite{DJT} that red and blue can coexist with red stochastically slower than blue on high-dimensional oriented lattices with spreading times that resemble Bernoulli bond percolation. 
 
The first rigorous result we know of for CE is Kordzakhia's proof that the phase transition occurs at $\lambda_c^-$ for CE on regular trees \cite{tree1}. Later, Kortchemski considered the process on the complete graph as well as trees with arbitrary branching number \cite{complete, tree_chase}. An alternative perspective of CE as scotching a rumor was studied by Bordenave in \cite{rumor}. 
The continuous limit of rumor scotching was studied many years earlier by Aldous and Krebs \cite{aldous}. Looking to model malware spread and suppression through a device network, Hinsen, Jahnel, Cali, and Wary studied CE on Gilbert graphs \cite{gilbert}.

To the best of our knowledge, CED has not been studied before. From the perspective of modeling species competition, it seems natural for prey to die from causes other than being consumed, and, in rumor scotching, for holders to cease spread because of fading interest. Furthermore, CED has new mathematical features. The existence of an escape phase, the fact that $E|\B| = \infty$ at criticality, and the lack of monotonicity of $P(B)$ in $\lambda$ are all different than what occurs in CE.

The perspective we take on weighted Catalan numbers also appears to be novel. Typically they are studied with integer weights \cite{combo1, combo3, shaderweighted}. We are interested in the fractional weights at \eqref{eq:ud}. Flajolet and Guilleman observed that fractionally weighted lattice paths describe birth and death chains for which the rates depend on the population size \cite{birth_death}. The distance between the rightmost red and rightmost blue for CED on the nonnegative integers is a birth and death chain in which mass extinction may occur. Since we are interested in CED on trees, we analyze the radius of convergence of the generating function of weighted Catalan numbers, which we believe has not been studied before. 



\section{CED on the line} \label{sec:CED+}

Let \Cp denote the process with CED dynamics on the nonnegative integers for which the vertices $0$ and $1$ are initially blue and red, respectively. All other vertices are initially white. 
Let $s_t (n) \in \{ w,r,b,\dagger \}$ indicate the state of vertex $n$ at time $t$. Define the processes
$B_t = \max \{ n \colon s_t(n) = b \}$, $R_t = \max \{ n \colon s_t(n) = r \}$, and the random variable
\begin{align}
Y = \sup \{ B_t \colon t \geq 0 \}. \label{eq:Ydef}
\end{align}
If $s_t(n) \neq r$ for all $n$, then define $R_t = -\infty$. 
Let $\partial_t = (s_t(n))_{n = B_t}^{R_t} $ 
be the state of the interval $[B_t, R_t]$. One can think of this as the boundary of the process. Note that this interval only makes sense when $R_t > -\infty$. \emph{Renewal times} are times $t \geq 0 $ such that
$\partial_t = (b,r).$
For $k \geq 0$ let
\begin{align}
\RR_k = \{B_t = k \text{ for some renewal time $t$}\}\label{eq:def.R_k}
\end{align}
be the event that there is a renewal when blue occupies site $k$. Also define the event $A_t = \lbrace s_t (n) \neq \dagger \text{ for all } n\}$ that none of the red sites have died up to time $t$.

Let $S_t = R_t - B_t$ be the distance between the rightmost blue and red particles at time $t$. 
Define the collection of \emph{jump times} as $\tau (0) = 0$ and for $i \geq 1$
 $$\tau (i) = \inf \{ t \geq \tau (i-1) \colon S_t \neq S_{ \tau (i-1) } \text{ or } \mathds{1} (A_{t} ) = 0   \}.$$ 
The \emph{jump chain} $J = (J_i)$ of $S_t$ is given by
\begin{equation*}
	J_i =
	\begin{cases}
	S_{ \tau (i) },  & \mathds{1} (A_{\tau(i)}) = 1 \\
	0       ,        & \text{otherwise}
	\end{cases}.
\end{equation*}
This is a Markov chain with an absorbing state $0$ corresponding to blue no longer having the potential to reach infinity. The transition probabilities for $j > 0$ are:
\begin{align}
	p_{j, j+1} 
	&= \frac{\lambda}{1+\lambda+j\rho}, 
	\qquad
	p_{j, j -1}
	 = 
	\frac{1}{1+\lambda+j\rho}, \qquad 
	p_{j, 0}
	 = 
	\frac{j\rho}{1+\lambda+j\rho}. \label{eq:transitions}
\end{align}

Call a jump chain $(J_0,\hdots, J_n)$ \emph{living} if $J_i>0$ for all $0 \leq i \leq n$.  
Translating the set of Dyck paths of length $2k$ up by one vertical unit gives the jump chains corresponding to $\RR_k$. Notice  that  $p_{j,j+1} = u(j-1)$ and $p_{j,j-1} = v(j-2)$ with $u$ and $v$ as in \eqref{eq:ud}. Thus, it is easy to see that for all $k\geq 0$ we have
\begin{align}
P_{\lambda,\rho}(\RR_k) = C_k^{\lambda,\rho},	\label{eq:R_k=C_k}
\end{align}
with $C_k^{\lambda,\rho}$ the weighted Catalan number defined in Section \ref{sec:methods}.


\begin{lemma} \thlabel{lem:Rlb}
For any $\epsilon>0$ there exists $\rho'>0$ such that for all $\rho \in [0,\rho')$ and sufficiently large $k$ $$C_k^{\lambda,\rho} \geq \left(\f{(4-\epsilon)\lambda}{(1+\lambda)^2} \right)^k.$$ 
\end{lemma}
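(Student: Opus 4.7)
The plan is to lower bound the Catalan sum by restricting to Dyck paths confined to a fixed strip $\{0, 1, \ldots, M\}$, tuning $M$ large (depending on $\epsilon$) and then $\rho$ small (depending on $M$). For such a restricted path $\gamma$, let $a_j(\gamma)$ be the number of rise steps from height $j$; then $\sum_{j=0}^{M-1} a_j(\gamma) = k$, and since each rise from height $j$ is matched by a fall to height $j$,
\[
\omega(\gamma) = \prod_{j=0}^{M-1}\bigl(u(j)v(j)\bigr)^{a_j(\gamma)}.
\]
The product $u(j)v(j) = \lambda/[(1+\lambda+(j+1)\rho)(1+\lambda+(j+2)\rho)]$ is decreasing in $j$, so $\omega(\gamma) \geq c_{M,\rho}^k$, where
\[
c_{M,\rho} := \frac{\lambda}{(1+\lambda+M\rho)(1+\lambda+(M+1)\rho)}.
\]
Summing over strip-confined paths gives $C_k^{\lambda,\rho} \geq N_k^{(M)}\, c_{M,\rho}^k$, where $N_k^{(M)}$ denotes the number of Dyck paths of length $2k$ of maximum height at most $M$.

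Next I would compute the asymptotics of $N_k^{(M)}$. Such paths correspond to closed walks of length $2k$ based at $0$ on the path graph with vertex set $\{0, \ldots, M\}$, whose adjacency matrix has eigenvalues $2\cos(\ell \pi/(M+2))$ for $\ell = 1, \ldots, M+1$. Diagonalizing gives
\[
N_k^{(M)} = \frac{2}{M+2}\sum_{\ell=1}^{M+1}\sin^2\!\bigl(\tfrac{\ell \pi}{M+2}\bigr)\bigl(4\cos^2\!\bigl(\tfrac{\ell \pi}{M+2}\bigr)\bigr)^{k},
\]
whose dominant $\ell=1$ term produces $N_k^{(M)} \geq \tilde c_M \mu_M^k$ for $k$ large, with $\mu_M := 4\cos^2(\pi/(M+2))$ and $\tilde c_M > 0$. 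Crucially $\mu_M \nearrow 4$ as $M \to \infty$, which encodes the classical $C_k^{1/k}\to 4$ asymptotic.

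Given $\epsilon > 0$, I would then choose $M$ large enough that $\mu_M \geq 4 - \epsilon/3$, and $\rho' > 0$ small enough (depending on $M$) that $c_{M,\rho} \geq (1-\epsilon/16)\,\lambda/(1+\lambda)^2$ for all $\rho \in [0,\rho')$. A short algebraic check gives $\mu_M(1-\epsilon/16) > 4 - \epsilon$, so $N_k^{(M)}\, c_{M,\rho}^k$ eventually dominates $((4-\epsilon)\lambda/(1+\lambda)^2)^k$; the positive prefactor $\tilde c_M$, which depends only on $M$ (itself frozen once $\epsilon$ is fixed), is absorbed into the strict exponential excess for $k$ sufficiently large.

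The main step requiring attention is the spectral asymptotic for $N_k^{(M)}$; this is classical for walks in a finite strip but needs to be invoked carefully enough to guarantee a strictly positive prefactor after the $\ell=1$ term is isolated. Everything else amounts to parameter tuning, so I do not anticipate a substantive further obstacle.
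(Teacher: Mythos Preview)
Your proposal is correct and follows the same overall architecture as the paper's proof: restrict to Dyck paths confined to a strip of fixed height, lower bound the weight of every such path uniformly by the worst single step in the strip, and then exploit that the number $N_k^{(M)}$ of height-bounded Dyck paths grows at an exponential rate approaching $4$ as the strip height $M\to\infty$. The choice of $M$ large and then $\rho'$ small is identical in spirit.

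The only genuine difference is in how the growth of $N_k^{(M)}$ is established. You invoke the spectral decomposition of the path graph on $\{0,\ldots,M\}$, obtaining the exact formula with dominant rate $\mu_M=4\cos^2(\pi/(M+2))$ and a positive prefactor. The paper instead argues combinatorially: concatenating $\lfloor k/m\rfloor$ independent Dyck paths of length $2m$ gives $C_{k,m}\ge (C_m)^{\lfloor k/m\rfloor}$, and then the standard asymptotic $C_m\sim \pi^{-1/2}m^{-3/2}4^m$ yields $C_{k,m}\ge (4-\delta)^k$ for $m$ and $k$ large. Your spectral route is sharper (it gives the actual exponential rate) but relies on a piece of external machinery; the paper's concatenation trick is more elementary and self-contained, needing only the Catalan asymptotic. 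Either approach suffices, and the remaining parameter-tuning steps are essentially the same in both.
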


\begin{proof}
Let $C_{k,m}$ be number of Dyck paths of length $2k$ that never exceed height $m$. We first claim that for any $\delta >0$, there exists $m_\delta$ such that 
\begin{align}C_{k,m_\delta} \geq (4-\delta)^k \label{eq:Ckm}\end{align}
for sufficiently large $k$. 

The $m$th Catalan number $C_m := C_{m,\infty}$ counts the number of Dyck paths of length $2m$. Consider any sequence of $\lfloor k/m\rfloor$ Dyck paths of length $2m$. If we concatenate these paths, we have a path of length $2m\lfloor k/m \rfloor \geq 2k-2m$ which stays below height $m$. We extend this path into a Dyck path of length $2k$ by concatenating the necessary number of up and down steps to the end in any manner. 
Since each of the $\lfloor k/m\rfloor$ Dyck paths of length $2m$ can be chosen independently of one another, we have $C_{k,m} \geq (C_{m})^{\lfloor k/m\rfloor}.$

Using the standard asymptotic relation $C_{m} \sim (1/\sqrt \pi) m^{-3/2} 4^{m}$ (see \cite{stanley15}), we have for large enough $m,k$
$$C_{k,m} \geq  \left(\f{4^{m}}{2\sqrt{\pi} m^{3/2}} \right)^{\lfloor k/m\rfloor} \geq \left( \f{4^{m}}{2\sqrt{\pi} m^{3/2} } \right)^{-1} \left( \f {4}{(2\sqrt{\pi} m^{3/2} )^{1/m} }\right)^{k}.$$

It is easy to verify that $(2\sqrt{\pi} m ^{3/2} )^{1/m} \to 1$ as $m \to \infty$. Thus, we can choose $m$ large enough so that 
$$\f{4}{( 2\sqrt{ \pi} m^{3/2})^{1/m}} > 4-  \f \delta 2.$$
We then have 
$$C_{k,m} \geq C\left( \f{4^{m}}{2\sqrt{\pi} m^{3/2} } \right)^{-1} (4-(\delta/2))^{k}.$$
This is true for all $m, k$ sufficiently large, and we can see that if we fix an $m_\delta$ big enough so that this inequality holds, then we can increase $k$ enough such that we have the claimed inequality at \eqref{eq:Ckm}.

Using the weights at \eqref{eq:ud}, each path $\gamma$ counted by $C_{k,m}$ satisfies
$$\omega(\gamma) \geq \left( \f{\lambda }{(1+ \lambda + m \rho)^2}\right)^{k},$$
because $\gamma$ has length $2k$ but never exceeds height $m$. Summing over just the Dyck paths counted by $C_{k,m}$ gives
\begin{align}
C_k^{\lambda,\rho} \geq C_{k,m} \f{\lambda^k }{(1+ \lambda + m \rho)^{2k}}.\label{eq:Ckmlb}	
\end{align}

Fix an $\epsilon > 0$. We can choose $\delta>0$ small enough so that $(4-\delta)(1-\delta) > 4-\epsilon$. By \eqref{eq:Ckm}, pick $m_\delta$ large enough to have $C_{k,m_\delta} > (4-\delta)^k$ for all sufficiently large $k$. Finally, choose $\rho'>0$ small enough so that 
$$\f{\lambda}{(1+ \lambda + m_\delta \rho')^2} > \f{\lambda}{(1+\lambda)^2}(1-\delta).$$
Since the $C_k^{\lambda,\rho}$ are decreasing in $\rho$, applying these choices to \eqref{eq:Ckmlb} gives the desired inequality for all $\rho \in [0 , \rho')$:
$$C_{k}^{\lambda,\rho} \geq \left( (4- \delta)(1-\delta)\f{\lambda}{(1+\lambda)^2}\right)^k \geq \left(\f{(4-\epsilon)\lambda}{(1+\lambda)^2} \right)^k.$$ 
\end{proof}

Recall the definition of $Y$ at \eqref{eq:Ydef}. We conclude this section by proving that $P(Y \geq k)$ can be bounded in terms of $P(\RR_k)$. The difficulty is that the event $\{Y\geq k\}$ includes \emph{all} realizations for which blue reaches $k$, while $\RR_k$ only includes realizations which have a renewal at $k$.  

\begin{lemma} \thlabel{lem:bigger}
	For $\rho >0$, there exists $C>0$, which is a function of $\lambda, \rho$, such that $P(Y\geq k) \leq C k^{1+\lambda/\rho} P(\RR_k)$ for all $k \geq 1$.
\end{lemma}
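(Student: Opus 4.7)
To prove $P(Y \geq k) \leq Ck^{1+\lambda/\rho} P(\RR_k)$, I would decompose the event $\{Y \geq k\}$ according to the state of the jump chain $J$ at the first moment blue occupies $k$, and bound each piece in terms of $P(\RR_k) = C_k^{\lambda,\rho}$. Write $T_k$ for the step of the $k$-th down-step of $J$ (corresponding to the $k$-th blue consumption). On $\{Y \geq k\}$, either (a) the chain is alive at step $T_k$, contributing $\sum_{m\geq 0} P(J_{T_k} = m)$, or (b) the chain has been killed by a death transition strictly before $T_k$ yet blue still advances to $k$ by consuming all remaining alive reds up to a dead site at position $\geq k+1$. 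Case (b) can be bounded using the same weighted-path framework by conditioning on the location of the dead site; it contributes at most a constant multiple of case (a), so I would treat it as a perturbation.

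For case (a), the term $m = 1$ is exactly $\RR_k$, so $P(J_{T_k} = 1) = C_k^{\lambda, \rho}$ by \eqref{eq:R_k=C_k}. For $m \neq 1$, $\{J_{T_k} = m\}$ is realized by jump-chain paths of length $2k + m - 1$ from state $1$ to state $m$, staying $\geq 1$, with $k$ down-steps and the last step a down-step from $m+1$ to $m$. These paths differ from the Dyck paths for $\RR_k$ by $m-1$ extra up-steps (when $m \geq 2$, or one fewer up-step for $m=0$). I would express the generating function $f_m(z) = \sum_k P(J_{T_k} = m) z^k$ via first-return decomposition of the chain at each height, obtaining a factorization in terms of the loop generating functions $g_j(z)$ at heights $j$ and the transition probabilities $p_{j, j \pm 1}$.

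The ratio $f_m(z)/g(z)$, where $g(z) = f_1(z) = \sum_k C_k^{\lambda, \rho} z^k$, is then a product of analytic factors near the critical radius. Extracting coefficients via the weighted Catalan structure, I would compare $P(J_{T_k}=m)$ to $P(\RR_k)$ term-by-term and sum over $m$. Heuristically, a linear factor $k$ in the exponent arises from the number of positions along the $k$-step sequence where an overshoot excursion can be inserted; the extra up-steps at height $j$ each contribute weight $u(j-1) = \lambda/(1+\lambda+j\rho)$, decaying like $\lambda/(j\rho)$ for large $j$, and the $k^{\lambda/\rho}$ factor then comes from a product of terms $1 + O(\lambda/(\rho j))$ over heights $j$ up to $k$, which exponentiates $\sum_{j=1}^{k} 1/j \sim \log k$ into $k^{\lambda/\rho}$.

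The main obstacle is making this comparison precise near the radius of convergence of $g(z)$, where the continued fractions for $g_j(z)$ exhibit their own singular behavior. A cycle-lemma or reflection argument should control the combinatorial placement of the extra up-steps, but uniform bounds on the $g_j$'s and careful tracking of how the combinatorial and analytic contributions combine are needed to produce exactly the exponent $1+\lambda/\rho$ rather than something off by a $\log$ factor or a constant in the exponent.
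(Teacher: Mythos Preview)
Your proposal has the right heuristics for the exponent $1+\lambda/\rho$, but it leaves two genuine gaps that you yourself flag. First, case (b) is not obviously a constant-multiple perturbation of case (a): once any red dies the jump chain is sent to $0$, yet blue can still reach $k$ provided the dead site lies beyond $k$; bounding this contribution requires tracking both where and when the death occurs relative to blue's position, and you give no mechanism for that. Second, your comparison of $P(J_{T_k}=m)$ to $P(\RR_k)$ via ratios of generating functions near the radius of convergence is exactly the delicate point, and you do not indicate how to close it beyond the heuristic $\prod_j(1+O(\lambda/(\rho j)))\sim k^{\lambda/\rho}$.

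The paper sidesteps both difficulties by choosing a different stopping time: it decomposes at the moment \emph{red} first reaches site $k$, not the moment blue does. At that instant blue sits at $k-\ell$ for some $2\le\ell\le k$, and the conditional probability that blue subsequently reaches $k$ is exactly $\sigma(\ell)=\prod_{i=1}^\ell(1+i\rho)^{-1}$, since only the survival of the reds in $[k-\ell+1,k]$ matters and everything happening beyond $k$ is irrelevant. This gives the exact identity $P(Y\ge k)=\sum_{\ell=2}^k q_\ell$, so your case (b) never arises. The remaining comparison of $q_\ell$ to $q_2$ (and hence to $P(\RR_k)$) is then purely combinatorial: one builds an explicit injection $\Gamma_\ell\hookrightarrow\Gamma_2$ by inserting $\ell-2$ fall steps just before the final rise of each path, and computes the weight ratio $q(\gamma)/q(\tilde\gamma)$ directly as a finite product bounded by $C_0\,\ell^{\lambda/\rho}$ via the elementary estimate $\prod_{i=3}^{\ell-2}\bigl(1+\lambda/(1+i\rho)\bigr)\le C_1\,\ell^{\lambda/\rho}$. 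Summing $\sum_{\ell=2}^k \ell^{\lambda/\rho}\le k\cdot k^{\lambda/\rho}$ produces the factor $k^{1+\lambda/\rho}$, with no singularity analysis needed anywhere.
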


\begin{proof}

Given a living jump chain $J = (J_0,J_1,\hdots, J_m)$, define the \emph{height profile} of $J$ to be 
$h(J) = (h_1(J),\hdots, h_{m+1}(J)),$
where $h_i(J)$ are the number of entries $J_\ell$ in $J$ with $\ell < m$ for which $J_\ell =i$. These values correspond to the total number of times that blue is at distance $i$ from red. Suppose that red takes $r(J)$ many steps in a jump chain $J$. 
 It is straightforward to show that
\begin{align}
p(J) = \lambda^{r(J)} \prod_{j=1}^{m+1} \left(\f{1}{1+ \lambda + j\rho} \right)^{h_j(J)}	\label{eq:gen_p}
\end{align}
is the probability that the process follows the living jump chain $J$.

We view realizations leading to outcomes in $\{Y \geq k\}$ as having two distinct stages. In the \emph{first stage}, the rightmost red particle reaches $k$. In the \emph{second stage}, we ignore red and only require that blue advances until it reaches $k$. The advantage of this perspective is that we can partition outcomes in $\{Y \geq k\}$ by their behavior in the first stage and then restrict our focus to the behavior of the process in the interval $[0,k]$ for the second stage.

 For $2 \leq \ell \leq k$, define $\Gamma_\ell$ to be the set of all living jump chains of length $2k-\ell-1$ which go from $(0,1)$ to $(2k-\ell-1, \ell)$ with the last step being a rise step (see Figure \ref{fig:path}). These are the jump chains from the first stage.
Now we describe the second stage. Assume that blue is at  $k - \ell$ when red reaches $k$. For blue  to reach $k$, the red sites in $[k-\ell+1, k]$ must stay alive long enough for blue to advance another $\ell$ steps. This has probability
$
  \sigma(\ell) := \prod_{i=1}^\ell (1 + i \rho)^{-1}
$.
Given $\gamma \in \Gamma_\ell$, the formula at \eqref{eq:gen_p} implies that the probability $Y \geq k$ and the first $2k - \ell-1$ steps of the process jump chain follow $\gamma$ is
\begin{align}q(\gamma) := p(\gamma) \sigma (\ell) = \lambda^{k-1} \sigma (\ell) \prod_{j=1}^{k+1} (1+ \lambda + j \rho)^{-h_j(\gamma)}. \label{eq:q}\end{align}

Let $q_\ell = \sum_{\gamma \in \Gamma_\ell} q(\gamma)$. Notice that 
\begin{align}q_2 \f{\lambda}{(1+ \lambda + \rho)(1+ \lambda + 2\rho)^2} \leq P(\RR_k).\label{eq:qR}	
\end{align}
This is because a subset of $\RR_k$ is the collection of processes which follow jump chains in $\Gamma_2$ and for which the next three steps have blue advance by one, then red advance by one, followed by blue advancing one. 
We will further prove that there exists $C_0$ (independent of $\ell$) such that 
\begin{align}q_\ell \leq C_0 \ell^{\lambda/\rho} q_2.\label{eq:q_ineq}\end{align} The claimed inequality then follows from \eqref{eq:qR}, \eqref{eq:q_ineq}, and the partitioning $P(Y \geq k) = \sum_{\ell = 2}^k q_\ell$. 

To prove \eqref{eq:q_ineq}, notice that by inserting $\ell-2$ fall steps right before the final upward step of each path in $\gamma\in \Gamma_\ell$, we obtain a path $\tilde \gamma \in \Gamma_2$ (see Figure \ref{fig:path}). Since the paths $\gamma$ and $\tilde \gamma$ agree for the first $2k - \ell -2$ steps, we have from \eqref{eq:q}
\begin{align}
q(\gamma) &= \f{ \sigma (\ell)}{ \sigma (2)\prod_{i=1}^{\ell-2} (1+ \lambda + i \rho)^{-1}} q(\tilde \gamma) \\
&= \f{(1+ \lambda + \rho)(1+\lambda+2\rho)}{(1+\ell \rho)(1+(\ell-1)\rho)} q(\tilde \gamma) \prod_{i=3}^{\ell-2}\left( 1 + \f{ \lambda}{1 + i \rho} \right). 
\end{align}
Rewriting as a sum and using integral bounds, one can verify that 
$$\prod_{i=3}^{\ell-2}\left( 1 + \f{ \lambda}{1 + i \rho} \right) \leq C_1 \ell^{\lambda/\rho},$$ 
for some $C_1$ that depends on $\lambda$ and $\rho$. This gives $q(\gamma) \leq C_0\ell^{\lambda/\rho} q(\tilde \gamma)$. Thus, $q_\ell \leq \sum_{\gamma \in \Gamma_\ell} C_0 \ell^{\lambda/\rho} q(\tilde \gamma).$
If we restrict to paths $\gamma \in \Gamma_\ell$, the map $\gamma \mapsto \tilde \gamma$ is injective and hence
$$ q_\ell \leq \sum_{\gamma \in \Gamma_\ell} C_0 \ell^{\lambda/\rho} q(\tilde \gamma) \leq C_0 \ell^{\lambda/\rho} \sum_{\gamma \in \Gamma_2} q(\gamma) =  C_0 \ell^{\lambda/\rho} q_2.$$
This yields \eqref{eq:q_ineq} and completes the lemma.
\end{proof}
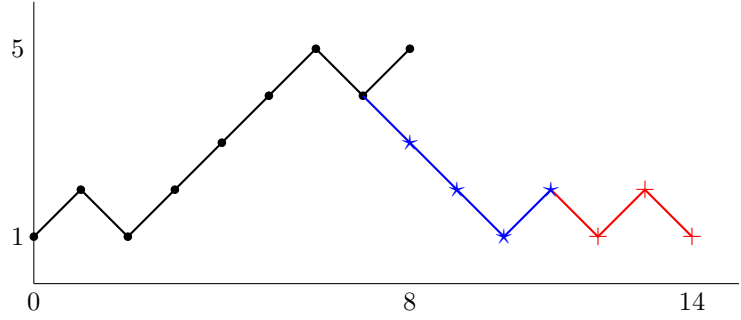
\begin{figure} 
\begin{center}
\begin{tikzpicture}[scale = 1.25]

\draw (0,0) -- (7.5,0);
\draw	(0,0)   node[anchor=north] {0}
		(4,0)   node[anchor=north] {8}
		(7,0) node[anchor=north] {14};
\draw	(0,.5)  node[anchor=east] {1}
		(0,2.5) node[anchor=east] {5};
\draw (0,0) -- (0,3);
\draw[thick] (0,.5) -- (.5,1) -- (1,.5) -- (1.5,1) -- (2,1.5) --(2.5, 2) -- (3,2.5) -- (3.5, 2) -- (4,2.5);
\draw[fill] (0,.5)  circle [radius=0.04]
            (.5,1)  circle [radius=0.04]
            (1,.5) circle [radius=0.04]
            (1.5,1) circle [radius=0.04]
            (2,1.5)  circle [radius=0.04]
            (2.5,2) circle [radius=0.04]
            (3,2.5) circle [radius=0.04]
            (3.5,2) circle [radius=0.04]
			(4,2.5) circle [radius=0.04];

\draw[thick, blue]  (3.5,2) -- (5,.5) -- (5.5,1);

\draw[thick, red]  (5.5,1) -- (6,.5) -- (6.5,1) -- (7,.5);
\node[blue] at (5.5,1) { \Large $\star$};
\node[blue] at (5,.5) {\Large $\star$};
\node[blue] at (4.5,1) {\Large $\star$};
\node[blue] at (4,1.5) {\Large $\star$};
\node[red] at (6,.5) {$+$};
\node[red] at (6.5,1) {$+$};
\node[red] at (7,.5) {$+$};
\end{tikzpicture}
\caption{
	Let $k=7$. The black line with dots is a path $\gamma \in \Gamma_{5}$. The blue line with stars is the modified path $\tilde \gamma \in \Gamma_2$. The red line with pluses is the extension of $\tilde \gamma$ to a jump chain in $\RR_7$.  
} \label{fig:path}
\end{center}
\end{figure}


\section{Properties of weighted Catalan numbers}\label{sec:cat}

\subsection{Preliminaries}
Given a sequence $(c_n)_{n \geq 0}$, define the formal continued fraction
\begin{align}
K[c_0,c_1,\hdots]  := \cfrac{ c_0}{ 1- \cfrac{c_1}{1-  \ddots}}.\label{eq:K}
\end{align}
The $c_i$ may be fixed numbers, or possibly functions. Also, whenever we write $x$ we mean a nonnegative real number, and $z$ represents an arbitrary complex number.

The discussion in \cite[Chapter 5]{goulden1985combinatorial} tells us that, for general weighted Catalan numbers, $g(z):=\sum_{k=0}^\infty C_k^{u,v} z^k$ is equal to the function
\begin{align}f(z) := K[1,a_0z,a_1z,\hdots] \label{eq:f}\end{align}
for all $|z| < M$, where $M$ is the radius of convergence of $g$ centered at the origin, and $a_j= u(j)v(j)$. $M$ is the modulus of the nearest singularity of $g$ to the origin, or by the Hadamard-Cauchy theorem
\begin{align}
M 	&= \frac{1} {\limsup_{k \to \infty} (C_k^{u,v})^{1/k} }.\label{eq:M_limsup}
\end{align}
Let $u(j)$ and $v(j)$ be as in \eqref{eq:ud} so that, unless stated otherwise, we have
\begin{align}
a_j = \f{ \lambda}{(1+ \lambda + (j+1) \rho)(1+ \lambda + (j+2) \rho)}.	\label{eq:aj}
\end{align}

\subsection{Properties of $f$ and $M$}

Our proof that $f$ is meromorphic relies on a classical convergence criteria for continued fractions \cite[Theorem 10.1]{wall2018analytic}.

\begin{thm}[Worpitzky Circle Theorem]\thlabel{thm:worpitzky}
	Let $c_j\colon D \to \{|w| < 1/4\}$ be a family of analytic functions over a domain $D \subseteq \mathbb C^n$. Then
	$K[1,c_0(z),c_1(z),\hdots]$ 
	converges uniformly for $z$ in any compact subset of $D$ and the limit function takes values in $\{ |z-4/3| \leq 2/3\}$.
\end{thm}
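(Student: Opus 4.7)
The plan is the classical two-stage continued fraction strategy: identify an invariant disk in which every finite truncation lives, then prove uniform-on-compacts Cauchy convergence via the three-term recurrence for convergents.

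First I would reduce the claim. Set $h(z) := K[c_0(z), c_1(z), \ldots]$, so the continued fraction in the theorem is $f(z) = 1/(1 - h(z))$. The Möbius map $\varphi(w) = 1/(1 - w)$ carries the closed disk $V = \{|w| \leq 1/2\}$ conformally onto $\{|w - 4/3| \leq 2/3\}$, and $h(z) \in V$ forces $|1 - h(z)| \geq 1/2$, so $f$ is analytic wherever $h$ is. Thus it suffices to show that $h$ is analytic on $D$ with values in $V$. For invariance, note that for $|c| \leq 1/4$ and $w \in V$ one has $|1 - w| \geq 1/2$, so the Möbius map $T_c(w) = c/(1 - w)$ satisfies $|T_c(w)| \leq |c|/(1/2) \leq 1/2$; that is, $T_c(V) \subseteq V$. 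The $n$-th convergent $h_n(z) = T_{c_0(z)} \circ \cdots \circ T_{c_{n-1}(z)}(0)$ is therefore analytic on $D$ and $V$-valued.

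For convergence, I would introduce the convergent polynomials $A_n, B_n$ satisfying $X_n = X_{n-1} - c_{n-1}(z) X_{n-2}$ with $h_n = A_n/B_n$, together with the standard determinant identity
$$h_{n+1}(z) - h_n(z) = (-1)^n \frac{\prod_{j=0}^n c_j(z)}{B_n(z) B_{n+1}(z)}.$$
An induction based on $|c_{n-1}| \leq 1/4$ yields $|B_n(z)/B_{n-1}(z)| \geq 1/2$, so the $B_n$ are nonvanishing on $D$. On a compact $K \Subset D$, each $|c_j(z)|$ is strictly bounded below $1/4$ by the open-disk hypothesis, and the diameters of the nested Möbius images $T_{c_0(z)} \circ \cdots \circ T_{c_{n-1}(z)}(V)$ shrink uniformly to zero; this is the crucial estimate, from which $(h_n)$ is uniformly Cauchy on $K$. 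The limit $h$ is then analytic on $D$ by Weierstrass and $V$-valued as a limit of $V$-valued functions, which completes the argument.

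The main obstacle is the quantitative convergence rate. The naive bounds $|c_j| \leq 1/4$ and $|B_n| \geq 2^{-n}$ alone do not make $|h_{n+1} - h_n|$ summable, since in the worst case $c_j \equiv 1/4$ the convergence is merely algebraic, not geometric. The classical workaround exploits the contraction of the compositions $T_{c_0} \circ \cdots \circ T_{c_{n-1}}$ on $V$ directly, showing that the nested image disks shrink to a single point via a hyperbolic-metric estimate. Establishing this diameter shrinkage and verifying it is uniform in $z$ over compact subsets of $D$ is the technical core of the proof; the parametric analytic dependence causes no extra trouble once the pointwise contraction estimate is in place.
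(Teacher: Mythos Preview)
The paper does not prove this statement at all: it is quoted as the classical Worpitzky Circle Theorem with a citation to Wall's treatise \cite[Theorem 10.1]{wall2018analytic}, and then used as a black box in the proof of \thref{lem:meromorphic}. So there is no ``paper's own proof'' to compare against; your proposal is simply an attempt to supply the classical argument.

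Your outline follows the standard value-region proof and is essentially sound: the invariant disk $V=\{|w|\le 1/2\}$, the M\"obius maps $T_c(w)=c/(1-w)$ sending $V$ into itself when $|c|\le 1/4$, and the nested-image/diameter-shrinkage mechanism are exactly how Worpitzky's theorem is usually established. One wrinkle in your uniformity argument deserves correction: you write that on a compact $K\Subset D$ ``each $|c_j(z)|$ is strictly bounded below $1/4$,'' but this bound need not be uniform in $j$ under the stated hypotheses, since the family $(c_j)$ is arbitrary. Fortunately this is not needed. The classical diameter estimate for the nested disks $T_{c_0}\circ\cdots\circ T_{c_{n-1}}(V)$ depends only on $n$ and the uniform bound $|c_j|\le 1/4$ (the worst case $c_j\equiv 1/4$ already gives diameter $O(1/n)$), so the convergence of $(h_n)$ is automatically uniform over all admissible sequences $(c_j)$, hence over $z\in K$. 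You correctly flag this shrinkage as the technical core; filling it in (via the hyperbolic-metric contraction on $V$, or equivalently via the explicit chordal-diameter formula in Wall or Jones--Thron) completes the proof.
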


\begin{cor} \thlabel{lem:meromorphic}
If $\rho>0$, then $f$ is a meromorphic function on $\mathbb C$.
\end{cor}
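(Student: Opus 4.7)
The plan is to exploit the decay of $a_j$ together with the Worpitzky Circle Theorem to show that a suitable tail of the continued fraction is analytic on any prescribed disc $\{|z| < R\}$, and then to propagate backward through finitely many reciprocal operations to obtain that $f$ itself is meromorphic on $\{|z| < R\}$. Since $R$ is arbitrary, this gives meromorphy on all of $\mathbb{C}$.

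First I would observe from \eqref{eq:aj} that, for $\rho>0$, $a_j = O(1/j^2)$ as $j\to\infty$, so in particular $a_j \to 0$. Fix $R>0$ and choose $N = N(R)$ large enough that $a_j R < 1/4$ for all $j \geq N$. Then the functions $c_j(z) := a_{N+j}\, z$ are analytic on the open disc $D_R = \{|z|<R\}$ and take values in $\{|w|<1/4\}$. By \thref{thm:worpitzky}, the tail
\[ F_N(z) := K[1,\, a_N z,\, a_{N+1} z,\, \ldots] \]
converges uniformly on compact subsets of $D_R$ to an analytic function $F_N\colon D_R \to \mathbb{C}$.

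Next, I would define $F_k$ for $k=N-1,N-2,\ldots,0$ by the backward recursion $F_k(z) = 1/(1 - a_k\, z\, F_{k+1}(z))$, which simply undoes one level of the continued fraction. Inductively $F_k$ is meromorphic on $D_R$ provided the denominator $1 - a_k z F_{k+1}(z)$ is not identically zero; but this denominator equals $1$ at $z=0$ (using $F_{k+1}(0) = 1$, which is immediate since setting $z=0$ collapses the continued fraction to $1$), so it is nonzero in a neighborhood of the origin. After finitely many such steps, $f = F_0$ is meromorphic on $D_R$. Since $R$ was arbitrary, $f$ is meromorphic on $\mathbb{C}$.

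The main content lies in the Worpitzky step, which converts convergence of the infinite tail into uniform convergence to an analytic limit; this is where the assumption $\rho > 0$ is essential, via the decay $a_j \to 0$. The backward propagation is routine, since on any connected domain sums, products, and reciprocals of meromorphic functions remain meromorphic once the relevant denominator is checked to be not identically zero.
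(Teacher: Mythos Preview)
Your proposal is correct and follows essentially the same approach as the paper: use $a_j\to 0$ (from $\rho>0$) to apply Worpitzky to a tail of the continued fraction on an arbitrary disc, obtaining an analytic tail, and then unwind finitely many levels to express $f$ as a quotient of analytic functions. Your backward recursion with the explicit check that $1 - a_k z F_{k+1}(z)$ is not identically zero (via its value at $z=0$) is slightly more careful than the paper's one-line ``$f$ is a quotient of two analytic functions,'' but the argument is the same.
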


\begin{proof}
We will prove that $f$ is meromorphic for all $z \in \Delta = \{ |z| < r_0\}$ with $r_0>0$ arbitrary.  Let $h_j(z) := K[a_jz, a_{j+1} z, \hdots]$
be the tail of the continued fraction so that $f(z) = K[1,a_0z, \hdots, a_{j-1}z, h_j(z)]$.  Since $\rho>0$ we have $|a_j|\downarrow 0$ as $j \to \infty$.
 It follows that for some $j= j(r_0)$ large enough, $|a_k z| \leq 1/4$ for all $k \geq j$ and $z\in \Delta$. \thref{thm:worpitzky} ensures that $|h_j(z)| < \infty$ and the partial continued fractions $K[a_j z, \hdots, a_n z]$
are analytic (again by \thref{thm:worpitzky}) and converge uniformly to $h_j$ for $z \in \Delta$.
 Thus, $h_j$ is a uniform limit of analytic functions and is therefore analytic on $\Delta$. 
 We can then write
$f(z) = K[1,a_0z , \hdots, a_{j-1} z, h_j(z)].$
Since each $a_i z$ is a linear function in $z$, $f$ is a quotient of two analytic functions.
\end{proof}

Next we show that the exponential growth rate of the $C_k^{\lambda,\rho}$ responds continuously to changes in $\rho$.

\begin{lemma} \thlabel{lem:cat_close}
Fix $\lambda>0$.

\begin{enumerate}[label = (\roman*)]
	\item Fix $\rho >0$. Given $0\leq \rho' <\rho$, there exists $\epsilon >0$ such that
\begin{align}\ckl (1+\epsilon)^k \leq C_k^{\lambda, \rho'}.  \label{eq:C1} \end{align}
	\item Fix $\rho >0$. Given $\epsilon ' >0$, there exists a $\delta > 0$ such that for $\rho' \in (\rho-\delta,\rho+\delta)$ 
\begin{align} (1- \epsilon')^k \leq \frac{C_k^{\lambda,\rho}}{C_k^{\lambda,\rho'}}\leq (1+\epsilon')^k \label{eq:C2}.
\end{align}
	\item If $\rho =0$, then given $\epsilon'>0$, there exists a $\delta>0$ such that for $\rho' \in [0, \delta)$ it holds that
\begin{align} 1 \leq \frac{C_k^{\lambda,\rho}}{C_k^{\lambda,\rho'}}\leq (1+\epsilon')^k \label{eq:C2}.
\end{align}
\end{enumerate}
	
\end{lemma}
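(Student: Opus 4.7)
My plan is to prove (i) and (ii) by pointwise comparison of the step weights $u(j)$ and $v(j)$, then multiplying over all $2k$ steps of any Dyck path. The key is that such a path has exactly $k$ rise steps (each contributing a factor $u(j)$) and $k$ fall steps (each contributing $v(j)$). For (i), when $\rho' < \rho$, a short calculation shows that the ratio $u_{\rho'}(j)/u_\rho(j) = (1+\lambda+(j+1)\rho)/(1+\lambda+(j+1)\rho')$ is monotonically increasing in $j$ with minimum $(1+\lambda+\rho)/(1+\lambda+\rho')$ at $j=0$, and analogously $v_{\rho'}(j)/v_\rho(j) \geq (1+\lambda+2\rho)/(1+\lambda+2\rho')$. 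Applying these bounds on each step of an arbitrary Dyck path $\gamma$ yields
\[
\omega_{\rho'}(\gamma) \geq \omega_\rho(\gamma) \cdot R^k, \qquad R := \frac{(1+\lambda+\rho)(1+\lambda+2\rho)}{(1+\lambda+\rho')(1+\lambda+2\rho')} > 1,
\]
and summing over $\gamma$ gives \eqref{eq:C1} with $1+\epsilon := R$.

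For (ii) with $\rho > 0$ fixed, I would bound $u_{\rho'}(j)/u_\rho(j)$ uniformly in $j \geq 0$. The same monotonicity analysis shows it lies between its $j=0$ value $(1+\lambda+\rho)/(1+\lambda+\rho')$ and its $j \to \infty$ limit $\rho/\rho'$, both of which approach $1$ as $\rho' \to \rho$; the same is true of the $v$-ratio. So given $\epsilon'$, I pick $\eta$ small enough that $(1-\eta)^{-2} \leq 1+\epsilon'$ and $(1+\eta)^{2} \leq (1-\epsilon')^{-1}$, then $\delta$ small enough (and $\delta < \rho$) that $\rho' \in (\rho-\delta, \rho+\delta)$ forces every step's ratio into $[1-\eta, 1+\eta]$. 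Multiplying over $2k$ steps and summing over paths produces the two-sided bound \eqref{eq:C2}.

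Part (iii) is the main obstacle because the pointwise approach fails: for $\rho' > 0$, $u_{\rho'}(j)/u_0(j) = (1+\lambda)/(1+\lambda+(j+1)\rho') \to 0$ as $j \to \infty$, so there is no uniform bound on pathwise weight ratios. The lower bound $1 \leq C_k^{\lambda,0}/C_k^{\lambda,\rho'}$ is immediate from monotonicity of $u,v$ in $\rho$. For the matching upper bound I would instead compare exponential growth rates. Since all weights equal $\lambda/(1+\lambda)^2$ when $\rho=0$, we have the explicit identity $C_k^{\lambda,0} = C_k \,[\lambda/(1+\lambda)^2]^k$; combining this with the Catalan asymptotic $C_k \sim 4^k/\sqrt{\pi k^3}$ and the matching lower bound $C_k^{\lambda,\rho'} \geq [(4-\epsilon)\lambda/(1+\lambda)^2]^k$ from \thref{lem:Rlb} (valid for $\rho'$ in some interval $[0, \rho_\epsilon)$ and $k$ sufficiently large) gives
\[
\frac{C_k^{\lambda,0}}{C_k^{\lambda,\rho'}} \leq \frac{1}{\sqrt{\pi k^3}}\left(\frac{4}{4-\epsilon}\right)^k \text{ for large } k.
\]
Choosing $\epsilon$ so that $4/(4-\epsilon) \leq 1+\epsilon'$ handles this range, and the finitely many small values of $k$ are absorbed by continuity of $C_k^{\lambda,\rho'}$ in $\rho'$ at $0$, which together yield a single $\delta > 0$ that works for every $k \geq 1$.
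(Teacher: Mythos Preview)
Your proof is correct and follows essentially the same approach as the paper: pointwise comparison of the step weights (the paper groups rise/fall pairs into $a_j = u(j)v(j)$, while you treat $u$ and $v$ separately, but this amounts to the same estimate with the same constant $R = a_0'/a_0$) for (i) and (ii), and for (iii) the combination of the explicit formula $C_k^{\lambda,0} = C_k[\lambda/(1+\lambda)^2]^k$ with \thref{lem:Rlb}. Your treatment of (iii) is in fact slightly more careful than the paper's, since you explicitly handle the finitely many small $k$ (for which \thref{lem:Rlb} does not apply) via continuity of $C_k^{\lambda,\rho'}$ in $\rho'$; the paper also uses the cleaner bound $C_k \leq 4^k$ rather than the asymptotic, which avoids tracking the subexponential prefactor.
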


\begin{proof}
Let $\gamma$ be a Dyck path of length $2k$. From the definition of $\ckl$, we have $w(\gamma)$ is a product of some combination of exactly $k$ of the $a_j$ terms. 
Let $a_j'$ be the weights corresponding to $\rho'$. It is a basic calculus exercise to show that, when $\rho' <\rho$ we have the ratio $a_j'/a_j>1$ is an increasing function in $j$. Let $a_0'/a_0 = 1+\epsilon$. Using the fact that $w(\gamma)$ and $w'(\gamma)$ have the same number of each weight, we can directly compare their ratio using the worst case lower bound 

\begin{align}
(1+\epsilon)^{k}= (a_0'/a_0)^{k}   \leq \frac{w'(\gamma)}{w(\gamma)}. \label{eq:w_ratio}
\end{align}
Cross-multiplying then summing over all paths $\gamma$ gives (i). 

Towards (ii), suppose $0 < \rho' < \rho$. Then we have $1 \leq a_j'/a_j \leq \left(\rho/\rho'\right)^2$.   Choose $\delta_1$ such that $(\rho-\delta_1)/\rho = \sqrt{1 - \epsilon'}$. Using the same logic as above, we have for $\rho' > \rho - \delta_1$ that
$$(1- \epsilon')^k \leq \left(\frac{\rho'}{\rho}\right)^{2k} \leq \frac{w(\gamma)}{w'(\gamma)} \leq 1^k \leq (1+ \epsilon)^k.$$
%
%
Now suppose $0 < \rho < \rho'$. Choose $\delta_2$ such that $(\rho + \delta_2)/\rho = \sqrt{1+\epsilon'}$. Then, following the same steps as above, we see that for $\rho' < \rho + \delta_2$

$$(1- \epsilon')^k \leq \left(1\right)^{k} \leq \frac{w(\gamma)}{w'(\gamma)} \leq \left(\frac{\rho'}{\rho}\right)^{2k}\leq (1+ \epsilon')^k.$$
%
%
The same reasoning as with \eqref{eq:w_ratio} and choosing $\delta = \min(\delta_1, \delta_2)$ gives (ii).

We now prove (iii). Notice that $C_{k}^{\lambda,0} = C_k (\lambda /(1+\lambda)^2)^k$ with $C_k$ the $k$th Catalan number, since the transition probabilities at \eqref{eq:transitions} for the associated jump chain are homogeneous when $\rho=0$. By \thref{lem:Rlb}, we have for a fixed $\tilde{\epsilon} > 0$ and the bound $C_k \leq 4^k$, there exists a $\rho' > 0$ such that  
\begin{align}
1 
\leq \frac{C^{\lambda,0}_k}{C^{\lambda,\rho'}_k} 
\leq \frac{C_k(\lambda/(1+\lambda)^2)^k}{(4-\tilde{\epsilon})^k \left( \lambda/(1+\lambda)^{2}\right)^{k}}  
= \f{ C_k}{ (4 - \tilde \epsilon)^k} \leq \left( \f { 4}{4 - \tilde \epsilon} \right)^k.\label{eq:long_ineq}\end{align}
Choosing $\tilde{\epsilon}$ such that $4/(4-\tilde{\epsilon}) = 1 + \epsilon'$ and letting $\delta$ be small enough so that \eqref{eq:long_ineq} holds for all $\rho' < \delta$, we obtain an identical bound to the one in \thref{lem:cat_close} (ii) but for $\rho'\in [0,\delta)$. 

\end{proof}

\begin{lemma} \thlabel{prop:M_cont}
$M$ is a continuous strictly increasing function for $\rho\in[0,\infty)$ satisfying $M(0) = (1+\lambda)^2/(4\lambda)$. 
\end{lemma}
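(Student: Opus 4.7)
The plan is to deduce all three claims directly from the Hadamard--Cauchy formula $M(\rho) = 1/\limsup_{k\to\infty}(C_k^{\lambda,\rho})^{1/k}$ at \eqref{eq:M_limsup} together with the three comparison bounds in \thref{lem:cat_close}.

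For the value $M(0)$, I would use the observation already noted in the proof of \thref{lem:cat_close}(iii) that when $\rho=0$ the transition probabilities in \eqref{eq:transitions} are homogeneous, so $C_k^{\lambda,0} = C_k\bigl(\lambda/(1+\lambda)^2\bigr)^k$, where $C_k$ is the ordinary Catalan number. Since $C_k^{1/k}\to 4$, the limsup equals $4\lambda/(1+\lambda)^2$, giving $M(0) = (1+\lambda)^2/(4\lambda)$.

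For strict monotonicity, fix $0 \le \rho' < \rho$ and apply \thref{lem:cat_close}(i) to obtain $\epsilon > 0$ with $(1+\epsilon)^k C_k^{\lambda,\rho} \le C_k^{\lambda,\rho'}$ for all large $k$. Taking $k$th roots and then $\limsup$ yields $(1+\epsilon)/M(\rho) \le 1/M(\rho')$, hence $M(\rho') \le M(\rho)/(1+\epsilon) < M(\rho)$. This covers both the interior case and the case $\rho' = 0$, so $M$ is strictly increasing on $[0,\infty)$.

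For continuity at a fixed $\rho \ge 0$ and any $\epsilon' > 0$, \thref{lem:cat_close}(ii) (for $\rho>0$) or (iii) (for $\rho=0$) produces $\delta>0$ such that for all admissible $\rho'$ within $\delta$ of $\rho$,
\begin{equation*}
(1-\epsilon')^k \le \frac{C_k^{\lambda,\rho}}{C_k^{\lambda,\rho'}} \le (1+\epsilon')^k
\end{equation*}
for all large $k$. Taking $k$th roots gives $(1-\epsilon')(C_k^{\lambda,\rho'})^{1/k} \le (C_k^{\lambda,\rho})^{1/k} \le (1+\epsilon')(C_k^{\lambda,\rho'})^{1/k}$, and applying $\limsup$ (which commutes with multiplication by a positive constant and preserves inequalities) yields
\begin{equation*}
\frac{1-\epsilon'}{M(\rho')} \le \frac{1}{M(\rho)} \le \frac{1+\epsilon'}{M(\rho')},
\end{equation*}
so $M(\rho')/(1+\epsilon') \le M(\rho) \le M(\rho')/(1-\epsilon')$. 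Since $\epsilon'$ is arbitrary, $M(\rho') \to M(\rho)$ as $\rho' \to \rho$, establishing continuity (right-continuity at the endpoint $\rho=0$). There is no real obstacle here: the only subtlety is making sure that the inequalities survive passing to $\limsup$, which works because each bound in \thref{lem:cat_close} holds pairwise in $k$ rather than merely asymptotically on $k$th roots.
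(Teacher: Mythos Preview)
Your argument is essentially the paper's own proof: both derive strict monotonicity from \thref{lem:cat_close}(i), continuity from parts (ii)/(iii), and the value $M(0)$ from the explicit formula $C_k^{\lambda,0}=C_k(\lambda/(1+\lambda)^2)^k$, all routed through the Hadamard--Cauchy formula \eqref{eq:M_limsup}.

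The one point you have skipped, which the paper handles first, is that $M(\rho)<\infty$ for every $\rho\ge 0$. Your strict-monotonicity step reads $M(\rho')\le M(\rho)/(1+\epsilon)<M(\rho)$, and the final strict inequality fails if $M(\rho)=\infty$; likewise your continuity conclusion $|M(\rho')-M(\rho)|\le \epsilon' M(\rho)$ is empty without finiteness. The paper disposes of this in one line by noting that the sawtooth Dyck path gives $C_k^{\lambda,\rho}\ge a_0^k$, hence $M(\rho)\le a_0^{-1}<\infty$. Add that observation and your proof is complete and matches the paper's.
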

\begin{proof}
First note that for any $\rho$, $M(\rho) \leq (a_0)^{-1}< \infty$. This is because $\ckl \geq \omega(\gamma_k) = (a_0)^k$ where $\gamma_k$ is the Dyck path consisting of $k$ alternating rise and fall steps. 

That $M$ is increasing follows immediately from the fact that the $C_k^{\lambda,\rho}$ are decreasing in $\rho$. To see that $M$ is strictly increasing we use \thref{lem:cat_close} (i) in combination with the definition of $M$ at \eqref{eq:M_limsup}. Indeed, the lemma implies that for any $0\leq \rho' < \rho$ there exists a $\delta >0$ such that
$$M(\rho') = \frac{1}{\limsup_{k \to \infty} (C_k^{\lambda,\rho'})^{1/k}} \leq \frac{1}{(1+\delta) \limsup_{k \to \infty} (C_k^{\lambda,\rho})^{1/k} } = \f{M(\rho)}{1 + \delta}.$$
So $M$ is strictly increasing. 


To show continuity for $\rho >0$, we use \eqref{eq:M_limsup}. Fix $\rho > 0$ and $\epsilon > 0$. Let $\epsilon' =  \epsilon/M(\rho)$ and let $\delta > 0$ be as in \thref{lem:cat_close} (ii). For $\rho' \in (\rho - \delta, \rho + \delta)$ we have  

$$
\liminf_{k \to \infty} \left(\frac{C_k^{\lambda,\rho}}{C_k^{\lambda,\rho'}}\right)^{1/k} \leq  \frac{M(\rho')}{M(\rho)} \leq \limsup_{k \to \infty} \left(\frac{C_k^{\lambda,\rho}}{C_k^{\lambda,\rho'}}\right)^{1/k}. 
$$
%
Using the bound in \thref{lem:cat_close} (ii) results in 
$$1- \epsilon' \leq \f{M(\rho')}{M(\rho)} \leq 1+ \epsilon'.$$ Simplifying, then replacing out $\epsilon'$ gives $|M(\rho')-M(\rho)|<\epsilon$. Thus, $M$ is continuous at $\rho > 0$. Continuity for $\rho =0$ uses a similar argument along with \thref{lem:cat_close} (iii). The explicit formula for $M(0)$ comes from the formula $C_k^{\lambda, \rho} = C_k ( \lambda / (1+\lambda)^2)^k$ and the fact that the generating function for the Catalan numbers has radius of convergence $1/4$. 
\end{proof}

\begin{lemma} \thlabel{lem:rho_d}
If $\lambda \in \Lambda$ then there exists a unique value $\rho_d >0$ such that $M(\rho_d) = d$; Moreover, if $\lambda \notin \Lambda$, then $M>d$ for all $\rho \geq 0$. 
\end{lemma}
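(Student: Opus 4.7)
The plan is to combine the three properties of $M$ established in \thref{prop:M_cont} (continuity, strict monotonicity on $[0,\infty)$, and the explicit value $M(0) = (1+\lambda)^2/(4\lambda)$) with an extra ingredient: $M(\rho) \to \infty$ as $\rho \to \infty$. Once these are in hand, the lemma reduces to an intermediate value theorem argument in $\rho$, where the location of $M(0)$ relative to $d$ depends on whether $\lambda$ lies in $\Lambda$.

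First I would identify the level set $\{\lambda : M(0) = d\}$. Solving $(1+\lambda)^2 = 4\lambda d$ gives the quadratic $\lambda^2 - (4d-2)\lambda + 1 = 0$, whose roots are exactly $\lambda_c^{\pm} = 2d - 1 \pm 2\sqrt{d^2 - d}$. Since $\lambda \mapsto (1+\lambda)^2/(4\lambda)$ is strictly convex on $(0,\infty)$ with minimum value $1 < d$ at $\lambda = 1$, a sign analysis shows $M(0) < d$ precisely when $\lambda \in \Lambda = (\lambda_c^-, \lambda_c^+)$, and $M(0) \geq d$ otherwise (with equality only at the endpoints).

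Next I would establish that $M(\rho) \to \infty$ as $\rho \to \infty$. Writing $a_j$ as in \eqref{eq:aj}, note that $a_j$ is decreasing in $j$, so $a_j \leq a_0 = \lambda / [(1+\lambda+\rho)(1+\lambda+2\rho)]$. Since each Dyck path of length $2k$ contributes weight at most $a_0^k$, we get
\begin{equation}
C_k^{\lambda,\rho} \leq C_k\, a_0^k,
\end{equation}
where $C_k$ is the unweighted $k$th Catalan number. Using $C_k^{1/k} \to 4$ and the formula \eqref{eq:M_limsup}, this yields
\begin{equation}
M(\rho) \geq \frac{1}{4a_0} = \frac{(1+\lambda+\rho)(1+\lambda+2\rho)}{4\lambda},
\end{equation}
which indeed tends to infinity with $\rho$.

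Finally I would conclude by two cases. If $\lambda \in \Lambda$, then $M(0) < d$ and $M(\rho)\to\infty$, so by continuity there exists $\rho_d > 0$ with $M(\rho_d) = d$, and strict monotonicity forces uniqueness. If $\lambda \notin \Lambda$, then $M(0) \geq d$, and strict monotonicity gives $M(\rho) \geq M(0) \geq d$ for all $\rho \geq 0$, with strict inequality whenever $\rho > 0$ (the only potential failure of the stated strict bound is at the boundary values $\lambda \in \{\lambda_c^-, \lambda_c^+\}$, where $M(0) = d$ exactly; this boundary subtlety does not affect later applications, where the relevant alternative is whether $M(\rho) = d$ admits a solution). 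The main obstacle, modest as it is, is the $\rho \to \infty$ limit; the rest is bookkeeping with \thref{prop:M_cont}.
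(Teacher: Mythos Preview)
Your proof is correct and follows essentially the same route as the paper: use the formula for $M(0)$ to determine on which side of $d$ the function starts, establish that $M$ eventually exceeds $d$, and invoke continuity plus strict monotonicity from \thref{prop:M_cont}. The only substantive difference is in the large-$\rho$ step: the paper argues that for $\rho > \lambda(d-1)$ the red branching process (with no blue) has finite expected size, hence $g_\rho(d) < \infty$ and $M(\rho) > d$, whereas you give the direct combinatorial bound $C_k^{\lambda,\rho} \leq C_k\,a_0^k$, yielding $M(\rho) \geq 1/(4a_0) \to \infty$. Your version is more self-contained; the paper's ties back to the probabilistic picture. You also correctly flag the boundary case $\lambda \in \{\lambda_c^-,\lambda_c^+\}$, where $M(0) = d$ and the strict inequality $M > d$ fails at $\rho = 0$; the paper glosses over this.
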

\begin{proof}
Fix $\lambda$. To signify the dependence on $\rho$, let $g_\rho(z)$ be the generating function of the $C_k^{\lambda,\rho}$. Using the continuity and strict monotonicity of $M$ in \thref{prop:M_cont}, to show the first statement, it suffices to prove that $g_\rho(d) < \infty$ for $\rho$ large enough, and that $g_0(d)=\infty$. It is easy to see that if $\rho>\lambda(d-1)$ then the branching process of red spreading with no blue particles has finite expected size, and thus $g_\rho(d)<\infty$ for such $\rho$.

Using the formula for $M(0)$ from \thref{prop:M_cont} and rearranging, we can see that when $\rho = 0$, $M <d$ whenever $\lambda d/(1+\lambda^2) > 1/4$. The set of $\lambda$ for which this occurs is by definition $\Lambda$. Therefore, $g_0(d) = \infty$, proving the first claim. 
The claim that $M>d$ for $\lambda \notin \Lambda$, follows from \thref{prop:M_cont}. The explicit formula for $M(0)$ is easily shown to satisfy $M(0)> d$, and since $M$ is increasing, this inequality holds for all $\rho \geq 0$. 
\end{proof}


Our next lemma requires a old theorem from complex variable theory (see \cite[Theorem IV.6]{flajolet2009analytic} for example). 

\begin{thm}[Pringsheim's Theorem] \thlabel{thm:pringsheim}
	If $f(z)$ is representable at the origin by a series expansion that has non-negative coefficients and radius of convergence $M$, then the point $z = M$ is a singularity of $f(z)$.
\end{thm}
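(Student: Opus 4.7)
The plan is to argue by contradiction in the classical way: suppose $z = M$ is a regular point and then use the non-negativity of coefficients to conclude that the series representation of $f$ actually converges on a disk strictly larger than $\{|z| < M\}$.

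First I would assume that $f$ extends analytically to an open neighborhood of $M$. Combined with analyticity inside the disk $\{|z| < M\}$, this gives an open disk centered at some real point $r \in (0,M)$ on which $f$ is analytic and whose radius strictly exceeds $M - r$. More precisely, one can choose $r$ close enough to $M$ and $s > 0$ small enough so that $r + s > M$ while the Taylor expansion of $f$ around $r$ still converges absolutely on the disk of radius $s$.

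Next, I would write the Taylor coefficients at $r$ in terms of the original coefficients. Since $r$ lies strictly inside the disk of convergence of the original expansion, term-by-term differentiation is valid and gives
\begin{equation}
\frac{f^{(n)}(r)}{n!} \;=\; \sum_{k \geq n} \binom{k}{n} a_k\, r^{k-n}.
\end{equation}
Crucially, every entry appearing here is non-negative, since $a_k \geq 0$ and $r > 0$. Substituting into the Taylor expansion around $r$ evaluated at $r + s$, I would obtain
\begin{equation}
f(r+s) \;=\; \sum_{n \geq 0} \frac{f^{(n)}(r)}{n!}\, s^n \;=\; \sum_{n \geq 0} \sum_{k \geq n} \binom{k}{n} a_k\, r^{k-n} s^n.
\end{equation}
Since this is a double sum with non-negative terms, Tonelli's theorem allows me to interchange the order of summation, giving
\begin{equation}
f(r+s) \;=\; \sum_{k \geq 0} a_k \sum_{n=0}^k \binom{k}{n} r^{k-n} s^n \;=\; \sum_{k \geq 0} a_k (r+s)^k.
\end{equation}
This last series therefore converges, even though $r + s > M$, which contradicts the definition of $M$ as the radius of convergence.

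The main technical subtlety is making sure the Fubini/Tonelli interchange is justified and that one can indeed choose $r, s$ with $r + s > M$ within the domain of analyticity at $M$; both are standard, the first following because every summand is non-negative and the second because analyticity at $M$ gives an open neighborhood that one can intersect with a real-line segment crossing $M$. No step should be genuinely difficult, and the result is standard; the value of the sketch lies in making explicit how non-negativity of $(a_n)$ converts local analyticity at $M$ into an enlargement of the disk of absolute convergence of the original series.
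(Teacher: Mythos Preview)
Your proof is correct and is the standard textbook argument for Pringsheim's theorem. Note, however, that the paper does not actually supply a proof of this statement: it is quoted as a classical result with a reference to Flajolet and Sedgewick's \emph{Analytic Combinatorics} (Theorem IV.6), so there is no ``paper's own proof'' to compare against. Your argument matches the one found in that reference and in most complex-analysis texts.
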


\begin{lemma}\thlabel{lem:M_regimes}
$M\leq d$ if and only if $g(d) = \infty$. 
\end{lemma}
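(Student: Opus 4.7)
The plan is to prove both directions separately, exploiting the interplay between Pringsheim's Theorem, the meromorphicity of $f$ established in \thref{lem:meromorphic}, and the non-negativity of the coefficients $C_k^{\lambda,\rho}$.

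For the easy direction, suppose $g(d) = \infty$. If we had $M > d$, then $d$ would lie strictly inside the disk of convergence of $g$, forcing $g(d) < \infty$, a contradiction. Hence $M \leq d$.

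For the harder direction, assume $M \leq d$. The case $M < d$ is immediate: since the radius of convergence is $M$ and the coefficients are non-negative, the series $\sum_k C_k^{\lambda,\rho} d^k$ diverges, so $g(d) = +\infty$. It remains to handle the boundary case $M = d$. Here I would first apply \thref{thm:pringsheim} (Pringsheim's Theorem): since $g$ has non-negative coefficients and radius of convergence $M$, the point $z = M$ is a singularity of $g$. Because $g(z) = f(z)$ throughout $|z| < M$ by \eqref{eq:f} and $f$ is meromorphic on all of $\mathbb{C}$ by \thref{lem:meromorphic}, this singularity at $z = M$ must be a pole of $f$. Consequently $|f(x)| \to \infty$ as $x \uparrow M$ along the real axis. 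Since $f(x) = g(x) = \sum_k C_k^{\lambda,\rho} x^k \geq 0$ for $x \in [0, M)$, the divergence must be to $+\infty$, that is, $f(x) \to +\infty$ as $x \uparrow M$.

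To conclude, I would apply the monotone convergence theorem to the series: because every term $C_k^{\lambda,\rho} x^k$ increases in $x \in [0, M]$,
\begin{equation*}
g(M) = \sum_{k=0}^\infty C_k^{\lambda,\rho} M^k = \lim_{x \uparrow M} \sum_{k=0}^\infty C_k^{\lambda,\rho} x^k = \lim_{x \uparrow M} g(x) = \lim_{x \uparrow M} f(x) = +\infty.
\end{equation*}
Since $d = M$, this gives $g(d) = \infty$, completing the proof. The main subtlety is the justification that the singularity guaranteed by Pringsheim forces $g(M) = \infty$ and not merely a finite-valued singular point such as a branch; this is precisely where meromorphicity of $f$ enters, upgrading the singularity to a pole.
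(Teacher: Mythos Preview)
Your proof is correct and follows essentially the same route as the paper: the easy implications come from the definition of the radius of convergence, and for the borderline case $M=d$ you invoke Pringsheim together with the meromorphicity of $f$ from \thref{lem:meromorphic} to upgrade the singularity to a pole, then pass the limit $x\uparrow M$ through the series. The paper's version is terser (it writes the single chain $g(d)=\lim_{x\uparrow d}g(x)=\lim_{x\uparrow d}f(x)=\infty$), while you make the monotone-convergence step explicit, but the argument is the same.
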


\begin{proof}
We first note that the implication ``$M < d$ implies $g(d) = \infty$" as well as the reverse direction ``$g(d) = \infty$ implies $M \leq d$" both follow immediately from the definition of the radius of convergence. 
It remains to show that $M=d$ implies $g(d) = \infty$.
\thref{lem:meromorphic} proves that $f$ is a meromorphic function. Since $g=f$ for $|z| <M$, $f(x) > 0$ for $x \in (0,d)$, and \thref{thm:pringsheim} gives $z=d$ is a pole, we have for $x \in \mathbb R$ that $$g(d) = \lim_{x\uparrow d} g(x) = \lim_{x\uparrow d} f(x) = \infty.$$

\end{proof}

\begin{remark} \thlabel{rem:EZ}
\emph{Why does the argument that $g(d) = \infty$ when $M=d$ not also apply to CE which we know has different behavior for $\lambda = \lambda_c^-$? The difference is that when $\rho=0$, $$g(z) = \sum_{k=0}^\infty C_{k}^{\lambda,0} = \sum_{k=0}^\infty C_k \left (\f \lambda {(1+\lambda)^2}\right)^k z^k = \f{1 - \sqrt{1- 4z\lambda / (1+\lambda)^2}}{2z}$$ has a branch cut rather than isolated poles. \cite[Theorem IV.10]{flajolet2009analytic} ensures that the growth of the $C_k^{\lambda,\rho_c}$ is determined by the orders of the singularities of $f$ at distance $M=d$. Formally,
\begin{align}
C_{k}^{\lambda,\rho_c} = \sum_{|\alpha_j| = d} \alpha_j^{-k} \pi_j(k)  + o(d^{-k})\label{eq:ac}
\end{align}
where the $\alpha_j$ are the poles of $f$ at distance $d$ and the $\pi_j$ are polynomials with degree equal to the order of the pole of $f$ at $\alpha_j$ minus one. 
 When evaluating at the radius of convergence $M=\lambda_c^-$, this gives a summable pre-factor of $k^{-3/2}$ in \eqref{eq:ac}. }
\end{remark}

\section{$M$ and CED} \label{sec:M}

The main results from this section are that for $\lambda \in \Lambda$ we have $\rho_c = \rho_d$ and that $P(B)$ is continuous at $\rho_d$. The lemmas in this section will also be useful for characterizing the phase behavior of CED. 

\begin{prop} \thlabel{cor:rho_d=rho_c}
	For $\lambda \in \Lambda$ it holds that $\rho_d = \rho_c$.
\end{prop}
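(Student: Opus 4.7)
Assume $\lambda \in \Lambda$. The plan is to establish $\rho_d = \rho_c$ by matching inequalities, building on the embedded branching process described in Section \ref{sec:methods}. Combining \thref{prop:M_cont}, \thref{lem:rho_d}, and \thref{lem:M_regimes} tells us that $g(d) = \sum_k d^k C_k^{\lambda,\rho}$ is finite precisely when $\rho > \rho_d$ and infinite otherwise, so this dichotomy will drive both halves of the argument.

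For the upper bound $\rho_c \leq \rho_d$, I would show $P(B) = 0$ whenever $\rho > \rho_d$ via a first-moment estimate on $|\B|$. Since the CED dynamics restricted to any fixed path from $\mathfrak b$ through the root (ignoring off-path events) have the same distribution as the line process, \thref{lem:bigger} gives $P(v \in \B) \leq C k^{1+\lambda/\rho} C_k^{\lambda,\rho}$ for any vertex $v$ at depth $k$. Summing over the $d^k$ depth-$k$ vertices and over $k$ yields $E|\B| \leq C' \sum_k k^{1+\lambda/\rho} d^k C_k^{\lambda,\rho}$, which converges because $\rho > \rho_d$ forces strict geometric decay of $d^k C_k^{\lambda,\rho}$ that dominates the polynomial prefactor. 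Hence $|\B| < \infty$ almost surely, $P(B) = 0$, and $\rho_c \leq \rho_d$.

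For the lower bound $\rho_c \geq \rho_d$, the plan is to exhibit an embedded Galton--Watson branching process whose survival forces $P(B) > 0$. Call a vertex $v$ a \emph{renewal} if at some time blue occupies the parent of $v$, red occupies $v$, and all strict descendants of $v$ are white; by the strong Markov property and self-similarity of $\T_d$, conditioning on a renewal at $v$ makes the future process in the subtree rooted at $v$ an independent copy of the original process, so the renewals form a Galton--Watson tree. The identification $P(\RR_k) = C_k^{\lambda,\rho}$ from \eqref{eq:R_k=C_k} combined with self-similarity gives the expected total population of this process equal to $g(d)$, so $\rho < \rho_d$ forces the offspring mean $\mu$ to satisfy $\mu \geq 1$. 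I would then invoke continuity and strict monotonicity of $\mu$ in $\rho$ (paralleling \thref{prop:M_cont}) to conclude that $\mu(\rho) > 1$ strictly for $\rho < \rho_d$. Strict supercriticality gives positive survival probability, which entails infinitely many renewals and hence $|\B| = \infty$, so $P(B) > 0$ and $\rho_c \geq \rho_d$.

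The hard step will be pinning down the offspring distribution of the embedded branching process precisely enough to obtain $E[\text{total renewals}] = g(d)$ and to rule out the critical case $\mu = 1$ for $\rho$ strictly below $\rho_d$. The $d^k$ factor in $g(d)$ encodes the $d^k$ choices of which child the red frontier takes at each advance along an encoded Dyck path, so the branching factor of the process aligns with this combinatorics; strict monotonicity of $\mu$ should follow from a standard monotone coupling of CED processes in $\rho$.
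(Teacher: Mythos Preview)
Your approach is correct and follows essentially the same branching-process strategy as the paper, which packages the two directions into Lemmas \ref{prop:M<} and \ref{prop:M=} and then cites them. Your lower bound (showing $P(B)>0$ for $\rho<\rho_d$ via supercriticality of the renewal branching process, with strict monotonicity of the offspring mean ruling out the critical case) is exactly the content of \thref{prop:M<} and \thref{lem:bp}.

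The one substantive difference is on the upper bound. You show $P(B)=0$ for $\rho>\rho_d$ via the first-moment bound $E|\B|<\infty$ (this is \thref{prop:M>}), whereas the paper instead proves the sharper statement $P(B)=0$ at $\rho=\rho_d$ itself (\thref{prop:M=}), which requires an additional Borel--Cantelli argument controlling excursions where blue lags far behind red. For the bare statement $\rho_c=\rho_d$, your route suffices and is simpler: knowing $P(B)>0$ on $[0,\rho_d)$ and $P(B)=0$ on $(\rho_d,\infty)$ already pins down the infimum. The paper's route additionally yields the critical behavior ``escape at $\rho=\rho_c$'' needed for \thref{thm:main}(i), which your argument does not deliver. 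So both work here, but the paper's version is doing double duty.

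Your closing paragraph correctly identifies where the care is needed: the precise bookkeeping that the expected total renewal count equals $g(d)$ (the paper handles this by the index shift noted before \thref{lem:bp}) and the strict monotonicity of $EZ$ in $\rho$ (the paper simply asserts this as ``easily observed''; your coupling suggestion is the natural justification).
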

\begin{proof}
Lemmas \ref{prop:M<} and \ref{prop:M=} give that whenever $\lambda \in \Lambda$ we have $\rho_d = \inf\{ \rho \colon P_{\lambda ,\rho} (B) = 0\}$, which is the definition of $\rho_c$.  
\end{proof}

\begin{lemma} \thlabel{prop:M>}
$M>d$ if and only if $E| \B| < \infty$.
\end{lemma}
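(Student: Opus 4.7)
The plan is to compare $E|\B|$ with the generating function $g(d) = \sum_k d^k C_k^{\lambda,\rho}$ and then read the claim off \thref{lem:M_regimes} together with the Hadamard--Cauchy formula \eqref{eq:M_limsup}.

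First I would decompose $E|\B|$ by depth. For any vertex $v$ at depth $k \geq 0$ of $\T_d$, the process restricted to the unique path from $\mathfrak b$ through the root to $v$ is distributed as \Cp on $\{0,1,\ldots,k+1\}$: the exponential spread- and death-clocks attached to edges off the path are independent of those on the path, so the positions of red and blue along the path depend on no information from other branches. Consequently $P(v \in \B) = P(Y \geq k+1)$, and summing over the $d^k$ vertices at depth $k$ together with the initial blue $\mathfrak b$ gives
\[
E|\B| \;=\; 1 + \sum_{k=0}^{\infty} d^k\, P(Y \geq k+1),
\]
so the finiteness of $E|\B|$ reduces to that of $\sum_{j \geq 1} d^j P(Y \geq j)$.

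Next I would sandwich $P(Y \geq k)$ by constant multiples of $C_k^{\lambda,\rho} = P(\RR_k)$. The inclusion $\RR_k \subseteq \{Y \geq k\}$ combined with \eqref{eq:R_k=C_k} yields the lower bound $P(Y \geq k) \geq C_k^{\lambda,\rho}$, while \thref{lem:bigger} and \eqref{eq:R_k=C_k} give $P(Y \geq k) \leq C k^{1+\lambda/\rho} C_k^{\lambda,\rho}$ for some $C = C(\lambda,\rho)$. Thus
\[
\sum_{k \geq 1} d^k C_k^{\lambda,\rho} \;\leq\; \sum_{k \geq 1} d^k P(Y \geq k) \;\leq\; C \sum_{k \geq 1} k^{1+\lambda/\rho} d^k C_k^{\lambda,\rho}.
\]

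Finally I would analyze both sides using $\limsup_k (C_k^{\lambda,\rho})^{1/k} = 1/M$ from \eqref{eq:M_limsup}. The polynomial factor $k^{1+\lambda/\rho}$ does not shift the radius of convergence, so the right-hand sum is finite precisely when $d < M$; in particular $M > d$ forces $E|\B| < \infty$. If $M < d$, the left-hand sum is infinite by the very definition of $M$, and if $M = d$ it is infinite by \thref{lem:M_regimes}, so in either subcase $E|\B| = \infty$. This gives the desired equivalence. The only step that requires genuine work is the coupling in the first paragraph identifying the path-process with \Cp; the rest is an elementary sandwich, and the borderline case $M = d$ is precisely what \thref{lem:M_regimes} was set up to handle.
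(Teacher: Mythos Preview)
Your proof is correct and follows essentially the same route as the paper's: both sandwich the depth-by-depth expansion of $E|\B|$ between $g(d)$ and a polynomially-weighted version of $g(d)$ via \thref{lem:bigger}, and then invoke \thref{lem:M_regimes} to handle the borderline case $M=d$. Your derivation of the formula $E|\B| = 1 + \sum_{k\geq 0} d^k P(Y\geq k+1)$ is in fact more careful than the paper's, which writes $E|\B| = \sum_k P(Y=k)d^k$ without justification; otherwise the arguments are identical.
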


\begin{proof}
Letting $Y$ be as at \eqref{eq:Ydef}, we first observe that 
$$E|\B| = \sum_{k=0}^\infty P(Y=k) d^k \geq \sum_{k=1}^\infty P(\RR_k) d^k = g(d).$$
Hence $E|\B|<\infty$ implies $g(d)<\infty$, which gives $M >d$ by \thref{lem:M_regimes}. 

For the other direction, using the comparison in \thref{lem:bigger} gives
\begin{align}
E |\B| &= \sum_{k=0}^\infty P(Y=k) d^k 
\leq 1+ \sum_{k=1}^ \infty C k^{1+\lambda/\rho}P(\RR_k) d^k\label{eq:EB}.
\end{align}
\thref{prop:M_cont} tells us that $M$ is continuous and since $M>d$, the sum on the right still converges with the polynomial prefactor. 
\end{proof}

Call a vertex $v \in \T_d$ a \emph{tree renewal vertex} if at some time it is red, the parent vertex is blue, and all vertices  one level or more down from $v$ are white. Note that this definition of renewal is a translation one unit left of the definition of renewals in the CED$^+$. This makes it so each vertex at distance $k$ from the root is a tree renewal vertex with probability $P(\RR_{k})$.
We call a tree renewal vertex $v$ a \emph{first tree renewal vertex} if none of the non-root vertices in the shortest path from $v$ to the root are renewal vertices. 
Let $Z$ be the number of first tree renewal vertices in CED.

\begin{lemma}\thlabel{lem:bp}
$E Z \geq 1$ if and only if $g(d) = \infty$. 
\end{lemma}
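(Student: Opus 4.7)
The plan is to show that $EZ \geq 1$ if and only if $g(d)=\infty$ via a one-generation branching decomposition of the renewal vertices. Let $N$ denote the total number of tree renewal vertices on $\T_d$. The paragraph preceding the lemma notes that a vertex at distance $k$ from the root is a tree renewal vertex with probability $P(\RR_k)=\ckl$, so summing over the $d^k$ vertices at level $k$ and using linearity of expectation gives
\[ EN = \sum_{k=0}^\infty d^k \ckl = g(d). \]
Hence it suffices to prove the equivalence $EZ \geq 1 \iff EN = \infty$.

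The next step is to exploit self-similarity. For each first tree renewal vertex $v$, let $T_v$ be the (random) time at which the configuration ``$v$ red, parent of $v$ blue, and subtree strictly below $v$ all white'' is first attained. At $T_v$, the state on the closed subtree rooted at $v$ coincides with the initial configuration of CED on $\T_d$, with $v$ playing the role of the root and the parent of $v$ playing the role of $\mathfrak b$. The strong Markov property then gives that the post-$T_v$ dynamics on $v$'s subtree form an independent copy of CED. Letting $N_v$ denote the number of tree renewal vertices in $v$'s closed subtree, this yields $N_v \stackrel{d}{=} N$. Moreover, the ``first'' property forces distinct first renewals to lie in disjoint subtrees, so the family $\{N_v\}$ is mutually independent and independent of $Z$.

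Every non-root tree renewal vertex belongs to exactly one first-renewal subtree (namely, the subtree of the topmost renewal on its path to the root), giving the decomposition
\[ N = 1 + \sum_{v \text{ first tree renewal}} N_v. \]
Taking expectations via Wald's identity yields the fixed-point relation $EN = 1 + EZ \cdot EN$, interpreted in $[0,\infty]$. From here: if $EZ \geq 1$, any finite value of $EN$ leads to the contradiction $EN \geq 1 + EN$, forcing $EN = \infty$; conversely, if $EZ < 1$ then $EN = 1/(1-EZ) < \infty$.

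The main obstacle will be making the strong Markov step rigorous at the random times $T_v$ and establishing the resulting independence of the subtree dynamics. Concretely, one must exhibit a filtration for which the $T_v$ are stopping times, and argue that events depending on disjoint subtrees become conditionally independent once the boundary state along the spine to $v$ has been revealed. Once this bookkeeping is done, the algebraic dichotomy follows directly from the recursion.
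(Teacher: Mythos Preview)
Your proposal is correct and follows essentially the same approach as the paper: both identify the tree renewal vertices as a Galton--Watson process with offspring distribution $Z$ and total progeny $N$ satisfying $EN=g(d)$, and then invoke the dichotomy $EZ\geq 1 \iff EN=\infty$. The paper simply cites ``standard facts about branching processes'' for this last step, whereas you spell out the one-generation decomposition $N=1+\sum_v N_v$ and the Wald/Tonelli computation explicitly; the strong-Markov/independence bookkeeping you flag as the main obstacle is exactly what the paper sweeps into the phrase ``using self-similarity of $\T_d$''.
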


\begin{proof}
 Using self-similarity of $\T_d$, the number of renewal vertices is a Galton-Watson process with offspring distribution $Z$. Since each of the $d^k$ vertices at distance $k$ have probability $P(\RR_k)$ of being a tree renewal vertex, the expected size of the Galton-Watson process is $g(d)$. Standard facts about branching processes imply the claimed equivalence. 
\end{proof}

 \begin{lemma} \thlabel{lem:M=d}
If $\lambda \in \Lambda$ and $\rho=\rho_d$, then $EZ\leq 1$. 	
\end{lemma}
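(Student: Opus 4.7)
The plan is to evaluate $EZ$ exactly via a renewal-theoretic identity on the CED$^+$ line. Let $q_k$ denote the probability in CED$^+$ that the first non-initial renewal occurs at site $k$, and set $h(z)=\sum_{k\geq 1}q_k z^k$.

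I would first argue that a fixed vertex $v$ at depth $k$ in $\T_d$ is a first tree renewal vertex precisely when the first non-initial renewal on the root-to-$v$ path in CED$^+$ occurs at depth $k$. This refines the path-to-line correspondence the paper already uses for $P(\RR_k) = C_k^{\lambda,\rho}$: off-path spreading does not alter the jump chain of the path, so the first-passage structure on the path agrees with that of CED$^+$. Linearity over the $d^k$ vertices at depth $k$ then gives
\[
EZ \;=\; \sum_{k\geq 1} d^k q_k \;=\; h(d).
\]

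Second, decomposing the renewal sites of CED$^+$ as the partial sums $0, T_1, T_1+T_2,\ldots$ of i.i.d.\ (possibly infinite) gaps with law $P(T_1 = k) = q_k$, the standard convolution identity yields
\[
g(z) \;=\; \sum_{n\geq 0} h(z)^n \;=\; \frac{1}{1-h(z)}
\]
on the interval $[0,M)$, so $h(z) = 1 - 1/g(z) < 1$ for $z\in [0,d)$. The hypothesis $M(\rho_d) = d$ combined with \thref{lem:M_regimes} gives $g(z) \uparrow \infty$ as $z\uparrow d$, hence $h(z)\uparrow 1$. Since $h$ has nonnegative coefficients, Abel's theorem (monotone convergence) gives $h(d) = \lim_{z\uparrow d} h(z) = 1$, and therefore $EZ = h(d) = 1 \leq 1$.

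The main obstacle I anticipate is the first step: one must check the first-passage version of the path-to-line correspondence, ensuring off-path spreads from intermediate ancestors do not disrupt the identification of the ``first renewal on the root-to-$v$ path'' event with the CED$^+$ first renewal event. This uses the same jump-chain reduction that the paper already applied for the marginal event $\RR_k$; the crucial observation is that off-path events neither change the on-path jump chain nor invalidate the ``no earlier renewal on the ancestral path'' condition.
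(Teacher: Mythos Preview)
Your proof is correct and takes a genuinely different route from the paper's argument. The paper argues by continuity in $\rho$: for $\rho > \rho_d$ one has $M > d$, hence $g(d) < \infty$, hence $EZ < 1$ by \thref{lem:bp}; then Fatou's lemma (using that each finitary event $\RR_k'$ has probability continuous in $\rho$) pushes the inequality down to $\rho = \rho_d$, yielding $EZ \le 1$.

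You instead work entirely at $\rho = \rho_d$, exploiting the renewal identity $g(z) = 1/(1-h(z))$ on $[0,M)$ together with the fact (established in the proof of \thref{lem:M_regimes}) that $g(x) \to \infty$ as $x \uparrow d$ when $M=d$. Monotone convergence then gives $h(d)=1$, so $EZ = 1$. This is more direct---no perturbation of $\rho$ is needed---and yields the sharper equality $EZ = 1$ rather than merely $EZ \le 1$.

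Regarding your acknowledged obstacle: you do not actually need to verify the first-passage event correspondence pointwise. The identity $EZ = h(d)$ already follows from the framework of \thref{lem:bp}. The Galton--Watson structure gives that the expected total number of tree renewal vertices equals $\sum_{n \ge 0}(EZ)^n$, while the path-to-line identification used there gives that this same expectation equals $g(d)$; combining, $EZ = 1 - 1/g(d) = h(d)$ whenever $g(d) < \infty$, and your Abel/MCT step handles the boundary case $g(d)=\infty$. So rather than arguing the off-path independence for \emph{first} renewals directly, you may simply invoke \thref{lem:bp} and deduce $EZ = h(d)$ from it.
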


\begin{proof}
 Since $M$ is strictly increasing in $\rho$ (\thref{prop:M_cont}) we have $M>d$ for $\rho>\rho_d$ and, so,  \thref{lem:bp} implies that $E Z < 1$. Let $\RR_k'$ be the event that a fixed, but arbitrary vertex at distance $k$ is a first tree renewal vertex. Fatou's lemma gives
\begin{align}
E_{\lambda,\rho_d} Z = \sum_{k=1}^\infty d^k P_{\lambda,\rho_d}(\RR'_k)&=\sum_{k=1}^\infty d^k \liminf_{\rho \downarrow \rho_d} \P(\RR_k') \label{eq:R'}\\
		& \leq \liminf_{\rho \downarrow \rho_d} \sum_{k=1}^\infty d^k \P(\RR_k') 
		= \liminf_{\rho \downarrow \rho_d} E_{\lambda, \rho} Z 
		\leq 1.
\end{align}
To see the second equality, notice that $\P(\RR_k')$ is continuous in $\rho$ at $\rho_d$, as $\RR_k'$ consists of finitely many jump chains.  
 
\end{proof}

\begin{lemma} \thlabel{prop:M<}
If $\lambda \in \Lambda$ and $\rho< \rho_d$, then $P(B) > 0$.\end{lemma}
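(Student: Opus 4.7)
The plan is to produce a supercritical Galton-Watson process embedded in CED by sampling tree renewal vertices at every $k_0$-th level of $\T_d$ for a carefully chosen depth $k_0$. Since $M$ is strictly increasing and continuous by \thref{prop:M_cont}, and $M(\rho_d) = d$ by \thref{lem:rho_d}, the assumption $\rho < \rho_d$ forces $M(\rho) < d$. The Hadamard-Cauchy formula \eqref{eq:M_limsup} then gives
\[
\limsup_{k \to \infty} \bigl(C_k^{\lambda,\rho}\bigr)^{1/k} = \frac{1}{M(\rho)} > \frac{1}{d},
\]
so there exists an integer $k_0 \geq 1$ with $d^{k_0} C_{k_0}^{\lambda,\rho} > 1$.

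I would then construct a branching process $(W_n)_{n\geq 0}$ by setting $W_0 = \{\text{root}\}$ and, given $W_n$, defining $W_{n+1}$ to be the set of tree renewal vertices located at distance exactly $k_0$ below some vertex of $W_n$. Appealing to the strong Markov property at each renewal time, together with memorylessness of the Poisson clocks and the disjointness of subtrees rooted at distinct elements of $W_n$, the CED dynamics in these subtrees (after the respective renewal times) are mutually independent, each distributed as a fresh copy of CED on $\T_d$. It follows that $(|W_n|)$ is a Galton-Watson process whose offspring distribution is the number of tree renewal vertices at distance $k_0$ from the root. By linearity of expectation, the identification $P(\RR_{k_0}) = C_{k_0}^{\lambda,\rho}$ from \eqref{eq:R_k=C_k}, and the fact that each of the $d^{k_0}$ vertices at distance $k_0$ is a tree renewal vertex with probability $P(\RR_{k_0})$, the offspring mean equals $d^{k_0} C_{k_0}^{\lambda,\rho} > 1$.

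Classical branching theory then guarantees that the supercritical process $(|W_n|)$ survives with positive probability. On the survival event, tree renewal vertices appear at arbitrarily large distances from the root; the parent of every tree renewal vertex is blue at the corresponding renewal time, and these parents occupy distinct vertices at unbounded depth. Therefore $|\B| = \infty$ on the survival event, giving $P(B) > 0$.

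The main technical hurdle is the rigorous verification that $(|W_n|)$ is genuinely Galton-Watson, i.e., that the post-renewal CED dynamics in the subtrees rooted at distinct elements of $W_n$ are mutually independent copies of CED on $\T_d$. This is handled by the strong Markov property applied at the individual (distinct, possibly random) renewal times, combined with the tree structure of $\T_d$, which ensures that subtrees rooted at distinct vertices share no edges and hence no Poisson clocks. A subtler point that motivates working at depth $k_0$ rather than using the first-renewal offspring $Z$ directly is that \thref{lem:bp} only yields $EZ \geq 1$ under the standing hypothesis, whereas the $k_0$-step construction promotes the strict inequality $M(\rho) < d$ into a strictly supercritical mean.
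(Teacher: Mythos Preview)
Your proof is correct and follows the same overall strategy as the paper—an embedded supercritical Galton–Watson process of tree renewal vertices—but establishes supercriticality by a different device. The paper works with the process of \emph{first} tree renewal vertices (offspring $Z$) and obtains $EZ>1$ by combining \thref{lem:bp} (which gives $EZ\geq 1$ whenever $g(d)=\infty$) with the observation that $EZ$ is strictly decreasing in $\rho$: for $\rho<\rho_d$ one picks $\rho'\in(\rho,\rho_d)$ and concludes $EZ(\rho)>EZ(\rho')\geq 1$. Your route instead extracts a single level $k_0$ with $d^{k_0}C_{k_0}^{\lambda,\rho}>1$ via Cauchy–Hadamard and builds the branching process at that fixed depth. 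Your argument sidesteps the monotonicity of $EZ$ and makes supercriticality explicit from one coefficient; the paper's version is shorter and stays with the natural first-renewal process, at the cost of invoking the (admittedly easy) strict monotonicity. Your closing remark that \thref{lem:bp} alone only yields $EZ\geq 1$ is accurate, but the paper does upgrade this to a strict inequality via monotonicity, so working with $Z$ directly also suffices.
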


\begin{proof}
It follows from \thref{prop:M_cont,lem:bp}, along with the easily observed fact that $EZ$ is strictly decreasing in $\rho$ that $EZ>1$ for $\rho < \rho_d$. Thus, the embedded branching process of renewal vertices is infinite with positive probability. As $|\B|$ is at least as large as the number of renewal vertices, this gives $P(B) >0$.
\end{proof}

\begin{lemma}\thlabel{prop:M=}
If $\lambda \in \Lambda$ and $\rho = \rho_d$, then $P(B) = 0$. 
\end{lemma}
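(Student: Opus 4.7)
The plan is to combine \thref{lem:M=d} with branching process theory via a self-similar recursion. By \thref{lem:M=d}, $E_{\lambda,\rho_d} Z \leq 1$. Since $M(\rho_d) = d$, \thref{lem:M_regimes} gives $g(d) = \infty$, and \thref{lem:bp} then yields $E_{\lambda,\rho_d} Z \geq 1$. Thus $E_{\lambda,\rho_d} Z = 1$, placing us in the critical branching regime. The offspring distribution is also non-degenerate, since the initial red root dies before any spread with probability $\rho_d/(1+\lambda d + \rho_d) > 0$, giving $P(Z=0) > 0$. Standard branching process theory then gives that the Galton--Watson tree of first tree renewal vertices is almost surely finite.

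To conclude $P(B) = 0$, I would use a self-similar recursion for $q := P(|\B \cap \T_d| < \infty)$. Let $W$ be the number of blue vertices in $\T_d$ that lie outside the subtrees rooted at every first tree renewal vertex, i.e., blue vertices whose ancestral path from the root encounters no non-root renewal. Conditional on $Z=k$ and the locations of the first tree renewal vertices $v_1,\ldots,v_k$, the blue counts $|\B \cap T_{v_i}|$ are i.i.d.\ copies of $|\B \cap \T_d|$ by self-similarity, and are independent of $W$. This gives the functional equation
\[
 q \,=\, P(W<\infty)\cdot G_Z(q),
\]
where $G_Z(s) = \E[s^Z]$. Provided $P(W<\infty) = 1$, this reduces to $q = G_Z(q)$, whose unique fixed point in $[0,1]$ for a critical non-degenerate $Z$ is $q = 1$, so $P(B) = 0$.

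The main obstacle will be establishing $P(W<\infty) = 1$. The CED$^+$ analysis in Section~\ref{sec:CED+} shows that on any single fixed ray of $\T_d$ the process agrees in law with CED$^+$, and the jump chain is almost surely absorbed at $0$ when $\rho>0$ (since the probability of escaping state $j$ without death, $(1+\lambda)/(1+\lambda+j\rho)$, decays to zero), so blue reaches finite depth on each fixed ray. Upgrading this pointwise-on-each-ray statement to finiteness of the total pre-renewal blue count $W$ requires controlling blue advance across uncountably many rays simultaneously, likely via a stochastic-domination argument by an auxiliary subcritical branching process. The key structural observation to exploit is that every blue vertex contributing to $W$ is itself not a renewal, which forces red to have spread strictly ahead of blue at the moment blue arrives; this should allow one to bound the branching of $W$ by a process of mean strictly less than one.
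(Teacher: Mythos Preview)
Your reduction to the critical Galton--Watson process is correct, and the recursion $q = G_Z(q)$ would indeed finish the proof once $P(W<\infty)=1$ is known. But that last step is the entire difficulty, and your proposed fix does not close it. The observation that every vertex in $W$ fails to be a renewal tells you only that, at each blue advance along the path, the gap to the red front is at least~$2$. This does \emph{not} produce a branching process of mean strictly less than one in any obvious way: the gap-$\geq 2$ excursions are dominated by the gap-$1$ renewal process via the monotonicity at \eqref{eq:transitions}, but domination by a \emph{critical} process gives you at best mean $\leq 1$, not $<1$, so you are back where you started. Extracting a strict inequality would require a separate quantitative argument that you have not supplied, and it is not clear one exists without essentially reproving the lemma.

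The paper takes a different and more direct route that sidesteps the decomposition entirely. It first fixes a large threshold $L_0$ and uses a union bound plus Borel--Cantelli to show that blue cannot advance indefinitely while staying at distance $\geq L_0$ from the red front (the death probability over $k$ steps at gap $\geq L_0$ beats the $d^k$ branching). Then, for each fixed $m<L_0$, the set of vertices at which the gap equals $m$ forms a branching process which, by the monotonicity of the transition probabilities in \eqref{eq:transitions}, is stochastically dominated by the $m=1$ renewal branching process. Since that process is critical (\thref{lem:M=d}) and hence almost surely finite, so is each $m$-process. Blue survival would require either the gap to stay $\geq L_0$ forever or one of the finitely many $m$-processes to be infinite, and both are ruled out. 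This two-scale argument (Borel--Cantelli for large gaps, domination for small gaps) is what you are missing; your attempt to handle all of $W$ at once with a single subcritical bound does not work at criticality.
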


\begin{proof}
The probability blue survives along a path while remaining at least distance $L >0$ from the most distant red site from the root for $k$ consecutive steps is bounded by the probability none of the intermediate red particles die, a quantity which is in turn bounded by:
\begin{align}
\left(\frac{1+\lambda}{1+ \lambda + L \rho}\right)^{k}. \label{eq:L}
\end{align}
Choose $L_0$ large enough so that \eqref{eq:L} is smaller than $(2d)^{-k}$. Let $D_k(v)$ be the event that, in the subtree rooted at $v$, the front blue particle moves from vertex $v$ to a vertex at level $k$ without ever coming closer than $L_0$ to the front red particle. Applying a union bound over all vertices at distance $k$ from $v$ and the estimate \eqref{eq:L} we have $P(\cup_{|v|=k} D_k(v)) \leq 2^{-k}$. As these probabilities are summable, the Borel-Cantelli lemma implies that almost surely only finitely many $D_k$ occur. 

On a given vertex self-avoiding path from the root to $\infty$, let $B_t$ (respectively, $R_t$) be the distance between the furthest blue (respectively, the furthest red) and the root. In order for $B$ to occur there must exist a path such that $B_t - R_t =m$ infinitely often for some fixed $m < L_0$. Suppose $B_t - R_t \geq m$ for all times and $B_t - R_t = m$ infinitely often. Self-similarity ensures that the vertices at which $B_t - R_t = m$ form a branching process. Using monotonicity of the jump chain probabilities at \eqref{eq:transitions}, this branching process is dominated by the embedded branching process of renewal vertices (i.e., when $m = 1$). \thref{lem:M=d} ensures that the embedded branching process of renewal vertices is almost surely finite (since it is critical), and thus for each fixed $m < L_0$ the associated branching process is also almost surely finite. As blue can only reach infinitely many sites if either infinitely many $D_k$ occur, or one of the $m$-renewing branching processes survives, we have $P(B)=0$. 
\end{proof}

\section{Proofs of Theorems \ref{thm:main}, \ref{cor:EB}, and \ref{cor:growth}} \label{sec:proofs}

\begin{proof}[Proof of \thref{thm:main}]
First we prove that $\rho_e = \lambda(d-1)$. In CED on $\T_d$ with no blue particles present, red spreads as a branching process with mean offspring $d \lambda / (\lambda + \rho)$. This is supercritical whenever $\rho < \lambda (d-1)$. It is easy to see that $P(R)>0$ for such $\rho$, since with positive probability a child of the root becomes red, then the red particle at the root dies. At this point, blue cannot spread, and red spreads like an unchased branching process. Since the red branching process is not supercritical for $\rho \geq \lambda(d-1)$, we have $P(R) =0$ for such $\rho$, which proves that $\rho_e$ is as claimed.

Now we prove (i). Suppose that $\lambda  \in \Lambda$. 
\thref{lem:rho_d} and \thref{cor:rho_d=rho_c} ensure that $\rho_c>0$, and the bound at \eqref{eq:f1f2} in \thref{cor:growth} implies $\rho_c< \lambda(d-1) = \rho_e$.
For $0\leq \rho < \rho_c$, \thref{prop:M<} implies that coexistence occurs with positive probability. For $\rho_c \leq \rho < \rho_e$, \thref{prop:M=} and \thref{prop:M<} imply that $P(B) =0$, but the red branching process survives so $P(R) >0$. Thus, escape occurs. For $\rho \geq \rho_e$, red cannot survive in CED with no blue, so extinction occurs.

We end by proving (ii). Suppose that $\lambda \notin \Lambda$. By \thref{lem:rho_d} we have $M>d$ for all $\rho \geq 0$. It then follows from \thref{prop:M>} that $E|\B| <\infty$ and so $P(B) =0$ for all such $\rho$. 
Similar arguments as in (i) that only consider the behavior of red after it separates from blue can show that the escape and extinction phases occur for $0\leq \rho < \rho_e$ and $\rho \geq \rho_e$, respectively.
\end{proof}

\begin{proof}[Proof of \thref{cor:EB}]
\thref{prop:M_cont} and \thref{cor:rho_d=rho_c} give that $\rho>\rho_c$ implies $M>d$, which, by \thref{prop:M>}, implies that $E|\B| < \infty$. This gives (i). The claim at (ii) holds because \thref{lem:rho_d} and \thref{cor:rho_d=rho_c} imply that $M=d$ when $\rho = \rho_c$. \thref{lem:M_regimes} gives that $M=d$ implies $g(d) = \infty$. Since $E |\B| \geq g(d)$, we obtain the claimed behavior. 
\end{proof}

\begin{proof}[Proof of \thref{cor:growth}]
Define the functions \begin{align*}
f_1(\lambda) &= \f 1{4}\left( \sqrt{(8d + 2) \lambda + \lambda^2 + 1} - 3 \lambda -3 \right), \\
f_2(\lambda)&= \frac{1}{4}\left(   \sqrt{ (32 d +2 ) \lambda + \lambda^2 + 1} -3 \lambda - 3 \right).
\end{align*}
We will prove that 
\begin{align}f_1(\lambda) \leq \rho_c(\lambda) < f_2(\lambda) \label{eq:f1f2}.
	\end{align}
Upon establishing this, it follows immediately that for large enough $d$ we have $c \sqrt d \leq \rho_c \leq C \sqrt d$ for any $c< \sqrt{\lambda /2}$ and $C > \sqrt{2 \lambda}.$

On a given path from the root, the probability that red advances one vertex and then blue advances is given by 
\begin{align} p = \f{\lambda}{(1+ \lambda + \rho)(1+\lambda + 2 \rho)}. \label{eq:p}
\end{align}

For each vertex $v$ at distance $k$ from the root, let $G_v$ be the event that red and blue alternate advancing on the path from the root to the parent vertex of $v$, after which red advances to $v$, and then does not spread to any children of $v$ before the parent of $v$ is colored blue. Letting  
\begin{align}c=\f{ \lambda}{1+ \lambda + \rho} \f{ 1}{ 1+ d \lambda + 2\rho},\label{eq:c}	
\end{align}
it is easy to see that $P(G_v) = cp^{k-1}$. A renewal occurs at each $v$ for which $G_v$ occurs. 
It is straightforward to verify that $pd>1$ whenever $\rho < f_1(\lambda)$. Accordingly, we can choose $K$ large enough so that $cp^{K-1} d^{K}>1$, and thus the branching process of these renewals is infinite with positive probability, which implies that $P(B) >0$. 

To prove the upper bound we observe that monotonicity of the transition probabilities at \eqref{eq:transitions} in $j$ ensures that the maximal probability jump chain of length $2k$ is the sawtooth path that alternates between height $1$ and height $2$. This path occurs as a living jump chain with probability $p^k$ with $p$ as in \eqref{eq:p}.
As this is the maximal probability jump chain of the $C_k$ many Dyck paths counted by $\RR_k$, we have  $g(d) \leq \sum_{k=1}^\infty C_k p^k d^k.$
The radius of convergence for the generating function of the Catalan numbers is $1/4$ with convergence also holding at $1/4$. Thus, when $\rho$ is large enough that $pd \leq 1/4$, we have $g(d) <\infty$. \thref{lem:M_regimes} and \thref{prop:M>} then imply that $E|\B| < \infty$ and so $\rho > \rho_c$.
It is easily checked that the above inequality holds whenever $\rho \geq f_2(\lambda)$. Thus, $\rho_c < f_2(\lambda).$

One can further check that $f_2(\LM) =0 = f_2(\LP)$ are the only zeros of $f_2$, so that $f_2(\lambda)>0$ for $\lambda \in \Lambda$. 
Moreover, we have
$$\rho_e - f_2(\lambda) = \lambda(d-1) - \frac{1}{4}\left(   \sqrt{ (32 d +2 ) \lambda + \lambda^2 + 1} -3 \lambda - 3 \right).$$
Some algebra gives $\rho_e - f_2(\lambda)=0$
is equivalent to solving $-8+\lambda (8 d+8)+\lambda^2 (8 \lambda-16 \lambda^2)=0,$
which has no real solutions for $d\geq 2$. 
As $\rho_e - f_2(\lambda)$ is continuous in $\lambda$ and positive at any choice of $d\geq 2$ and $\lambda$, we must have $\rho_e - f_2(\lambda) >0$. We thus arrive at the claimed relation $\rho_c \leq f_2(\lambda) < \rho_e$. 
\end{proof}


\section{Proof of \thref{thm:smooth}} \label{sec:smooth}

Recall our notation for continued fractions at \eqref{eq:K}. In this section we use the definition of $a_j$ at \eqref{eq:aj} and also define $b_j:= a_j d$. 
The following lemma shows that the nested continued fractions in the definition of $f$ at \eqref{eq:f} are decreasing when $z= M$. 

\begin{lemma} \thlabel{lem:K_mono}
$K[a_i M, a_{i+1}M, \hdots] \leq K[a_{i-1}M, a_iM, \hdots]$ for all $i \geq 1$. 	
\end{lemma}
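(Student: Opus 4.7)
Plan. The strategy is to rewrite the continued fractions as evaluations of shifted generating functions of weighted Catalan numbers, then exploit two monotonicity properties. For each $i \geq 0$, let $C_{k,i}$ be the weighted Catalan number obtained by replacing the weights $u(j), v(j)$ in \eqref{eq:ud} with $u(i+j), v(i+j)$, and set
\[
g_i(z) := \sum_{k\geq 0} C_{k,i}\, z^k.
\]
The standard continued-fraction expansion gives $g_i(z) = K[1, a_i z, a_{i+1} z, \ldots]$ together with the first-return functional equation $g_i(z) = 1/(1 - a_i z\, g_{i+1}(z))$. Combining this with the recursion $K[c_0, c_1, \ldots] = c_0/(1 - K[c_1, c_2, \ldots])$ yields the identity
\[
K[a_i z, a_{i+1} z, \ldots] \;=\; a_i z\, g_{i+1}(z),
\]
so the lemma reduces to proving $a_i\, g_{i+1}(M) \leq a_{i-1}\, g_i(M)$ for all $i \geq 1$.

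Two monotonicities drive the argument. First, $(a_j)$ is strictly decreasing in $j$ directly from \eqref{eq:aj}. Second, shifting the weighted Catalan numbers up replaces each weight $a_{i+j}$ by the smaller $a_{i+1+j}$, which shrinks the weight of every Dyck path; hence $C_{k,i+1} \leq C_{k,i}$ for every $k$, and therefore $g_{i+1}(M) \leq g_i(M)$ in $[0,\infty]$. Whenever $g_{i-1}(M) < \infty$, the two facts combine as
\[
a_i\, g_{i+1}(M) \;\leq\; a_{i-1}\, g_{i+1}(M) \;\leq\; a_{i-1}\, g_i(M),
\]
proving the reduced inequality.

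The main obstacle, mostly a bookkeeping matter, is handling the case $g_{i-1}(M) = \infty$. By Pringsheim's theorem together with the meromorphy of $f$ from \thref{lem:meromorphic}, the point $M$ is a pole of $g_0 = g$, so $g_0(M) = \infty$ in general. The functional equation at $i = 0$ then forces $a_0 M\, g_1(M) = 1$, giving $K[a_0 M, a_1 M, \ldots] = a_0 M\, g_1(M) = 1$; iterating the functional equation shows $g_j(M) < \infty$ for every $j \geq 1$. Thus the coefficient comparison above already covers all $i \geq 2$. The remaining case $i = 1$ follows from $g_1(M) \geq 1$ (its constant term is $1$), which yields
\[
K[a_1 M, a_2 M, \ldots] \;=\; a_1 M\, g_2(M) \;=\; 1 - \tfrac{1}{g_1(M)} \;<\; 1 \;=\; K[a_0 M, a_1 M, \ldots],
\]
completing the proof.
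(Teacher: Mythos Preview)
Your argument is correct and reaches the conclusion by a different route than the paper. The paper works entirely inside the continued-fraction formalism: it shows $f_i(x)<1$ for $x<M$, uses the monotonicity of a finite continued fraction in each of its entries (valid as long as every partial denominator stays positive) to compare the truncations $K[a_i x,\dots,a_n x]$ and $K[a_{i-1}x,\dots,a_{n-1}x]$ entry-by-entry, then passes to the double limit $n\to\infty$, $x\uparrow M$. You instead interpret $K[a_i z,\dots]=a_i z\,g_{i+1}(z)$ and compare the power series $g_i$ termwise via $C_{k,i+1}\le C_{k,i}$, which is a one-line consequence of the strict decrease of $(a_j)$. Your approach avoids having to check that intermediate continued fractions remain well defined during the entry-swapping argument; the price is the small detour through Pringsheim and \thref{lem:meromorphic} to pin down $g_1(M)<\infty$.

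Two small tidy-ups. First, the clause ``Whenever $g_{i-1}(M)<\infty$'' has the wrong index: the displayed chain involves $g_i$ and $g_{i+1}$, and in fact the inequality $a_i g_{i+1}(M)\le a_{i-1}g_i(M)$ holds in $[0,\infty]$ regardless, since the right side is $+\infty$ if $g_i(M)=\infty$. Second, once you have established $g_1(M)<\infty$, the termwise comparison already covers $i=1$ as well (because $g_2(M)\le g_1(M)<\infty$), so the separate functional-equation argument for $i=1$ is correct but unnecessary. Neither point affects the validity of the proof.
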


\begin{proof}
Let $f_i(x) = K[a_i x, a_{i+1} x, \hdots]$. Since the $f_i$ are analytic for $x<M$, we must have $f_i(x) < 1 $ for all $x < M$ (otherwise $f$ would have a singularity with modulus smaller than $M$). For any fixed $n$ we can the use monotonicity of $K$ when we change the entries of $K$ one by one and use the fact that $a_j < a_{j-1}$ for all $j \geq 1$ to conclude that 
\begin{align}
K[a_i x, a_{i+1}x, \hdots, a_n x] 
&\leq K[a_{i-1} x, a_{i} x,a_{i+2} x, \hdots, a_{n-1}x].
\end{align}
Taking the limit as $n\to \infty$ gives $f_i(x) \leq  f_{i-1}(x)$ for all $x < M$. Letting $x \uparrow M$, these inequalities continue to hold. 
 \end{proof}
Note that since $f(x)$ has a pole at $M$ and $f_i(M) \leq f_{i-1}(M)$, we must have that $f_0(M) = 1$ and $f_i(M) \leq 1$ for all $i \geq 0$.

\begin{proof}[Proof of \thref{thm:smooth}] 
When $\rho= \rho_c$, it follows from \thref{cor:rho_d=rho_c} and \thref{thm:pringsheim} that a pole of $f$ occurs at $z=d$. 
Let $K(\lambda,\rho) = K[b_0,b_1,\hdots]$. Due to \thref{lem:K_mono} and the equality $f(z) = (1- K[a_0 z, a_1 z,\hdots])^{-1},$
the singularity at $z=d$ occurs as a result of $K(\lambda, \rho_c(\lambda)) = 1.$ 

We can use a similar argument as in \thref{lem:meromorphic} to view $f$ as a meromorphic function in the complex variables $\lambda, \rho$ and $z$. 
Thus, when fixing $z=d$ and considering $\lambda$ and $\rho$ as nonnegative real numbers, the function $K(\lambda, \rho)= f_{\lambda,\rho}(d)$ is real analytic. 
Moreover, since $K$ is easily seen to be strictly decreasing in $\rho$, we have $\partial K/\partial \rho \neq 0$ at $(\lambda , \rho_c(\lambda))$. 
As $K(\lambda, \rho_c(\lambda)) \equiv 1,$
and $K$ is infinitely differentiable, it follows from the implicit function theorem that $\rho_c(\lambda)$ is smooth.
\end{proof}

\section{Proof of \thref{cor:algorithm}} \label{sec:approximations}

We begin this section by describing  lower and upper bounds on $C_k^{\lambda, \rho}$. These are easier to analyze than $C_k^{\lambda, \rho}$ and lend insight into the local behavior of $\rho_c$. In particular, we obtain if and only if conditions to have $\rho < \rho_c$ (\thref{lem:<iff}) and $\rho > \rho_c$ (\thref{lem:>iff}). We use these bounds to prove \thref{cor:algorithm}.

\subsection{A lower bound on $C^{\lambda,\rho}_k$}

 The idea is to assign weight 0 to rise and fall steps above a fixed height $m\geq 1$. Accordingly, we introduce the weights
	 \begin{align}\hat u(j) =  \begin{cases} u(j),& j \leq m \\ 0, &j > m
 		\end{cases}, \qquad \hat v(j) =  \begin{cases} v(j), &j \leq m \\ 0,& j> m
 		\end{cases}. \label{eq:hat_uv}	
 \end{align} 
 Here $u(j)$ and $v(j)$ are as in \eqref{eq:ud}. Let $\hat C_k^{\lambda,\rho}$ be the corresponding weighted Catalan numbers. Since $\hat u(j) \leq u(j)$ and $\hat v(j) \leq v(j)$ for all $j$, we have  $\hat C_k^{\lambda, \rho} \leq C_k^{\lambda, \rho}.$
 
 Let $\hat g_m(z) = \sum_{k=0}^\infty \hat C_k^{\lambda, \rho} z^k$. The truncation ensures that $\hat g_m(z) = K[1,a_0z,\hdots, a_m z]$
 is a rational function with radius of convergence $\hat M \geq M.$ Note that $\hat M$ depends on $\lambda, \rho$ and $m$. For nonnegative real values $x$ we have $\hat g_m(x) < \hat g_{m+1}(x)$. It follows from the monotone convergence theorem that $\lim_{m \to \infty} \hat g_m(x) = g(x)$
and thus 
\begin{align}\lim_{m \to \infty} \hat M = M.\label{eq:M_lim}\end{align}

\begin{lemma} \thlabel{lem:<iff}
$\rho < \rho_c$ if and only if $K[b_i,\hdots,b_m] >1$ for some $m\geq 1$ and $0 \leq i \leq m$.
 
\end{lemma}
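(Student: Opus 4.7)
My plan is to chain three equivalences. First, by \thref{cor:rho_d=rho_c} together with the strict monotonicity of $M$ in $\rho$ from \thref{prop:M_cont}, one has $\rho < \rho_c$ if and only if $M < d$. Second, from $\hat M(m) \geq M$ combined with $\hat M(m) \downarrow M$ in \eqref{eq:M_lim}, $M < d$ is equivalent to $\hat M(m) < d$ for some $m \geq 1$. The substantive step is to show $\hat M(m) < d$ for some $m$ if and only if $K[b_i, \hdots, b_m] > 1$ for some $m \geq 1$ and $0 \leq i \leq m$.

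For both directions of this substantive step, I would introduce the partial tail $V_m^{(i)}(z) := K[a_i z, a_{i+1} z, \hdots, a_m z]$, which satisfies the recursion $V_m^{(i)}(z) = a_i z / (1 - V_m^{(i+1)}(z))$ and the identity $V_m^{(i)}(d) = K[b_i, \hdots, b_m]$. For the $(\Leftarrow)$ direction, suppose $V_m^{(i)}(d) > 1$. The auxiliary generating function $\tilde g(z) := K[1, a_i z, \hdots, a_m z] = (1 - V_m^{(i)}(z))^{-1}$ is a rational function whose Taylor expansion at $0$ has nonnegative coefficients, namely the weighted Catalan numbers associated to the shifted weight sequence $(a_i, a_{i+1}, \hdots, a_m)$. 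Thus $\tilde g(0) = 1 > 0$ while $\tilde g(d) < 0$ as a rational function, so continuity forces $\tilde g$ to have a pole at some $z^{*} \in (0, d)$, and in fact $z^{*} < d$ strictly because $\tilde g(d)$ is finite. Since $a_j$ is strictly decreasing in $j$ for $\rho > 0$, the shifted weights are dominated termwise by the unshifted ones, so each shifted weighted Catalan number is bounded by the corresponding $(m-i)$-truncated Catalan number from the definitions preceding \eqref{eq:M_lim}. Hence the radius of convergence of $\tilde g$ dominates $\hat M(m-i)$, yielding $\hat M(m-i) \leq z^{*} < d$, and therefore $M < d$.

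For the $(\Rightarrow)$ direction, suppose $\hat M(m) < d$ for some $m$. Then $\hat g_m$ has a pole at $z^{*} = \hat M(m) \in (0,d)$, which by the continued-fraction recursion must arise from $V_m^{(j)}(z^{*}) = 1$ for some $j \in [0, m]$. I would choose $i^{*} \in [0, m]$ maximal with the property that $V_m^{(i^{*})}$ attains the value $1$ somewhere on $(0, d]$. By maximality, a top-down induction on $j = m, m-1, \hdots, i^{*}+1$ shows that each such $V_m^{(j)}$ is continuous, strictly increasing, and takes values in $[0, 1)$ on $[0, d]$; then the recursion makes $V_m^{(i^{*})}$ itself continuous and strictly increasing on $[0, d]$, hitting $1$ at a unique $z' \in (0, d]$.

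The hard part will be ruling out the boundary case $z' = d$, which would yield only $V_m^{(i^{*})}(d) = 1$ rather than the strict inequality required. This situation corresponds to a codimension-one algebraic coincidence between the weights and $d$, and I would resolve it by passing to a larger truncation: since $\hat M(m') \downarrow M < d$ strictly as $m' \to \infty$, for sufficiently large $m'$ the smallest pole sits well inside $(0, d)$ and the maximal-$i^{*}$ choice for this $m'$ produces $z' < d$ strictly, whence $V_{m'}^{(i^{*})}(d) > 1 = V_{m'}^{(i^{*})}(z')$. Carefully verifying this perturbation argument, together with the top-down induction that the recursion preserves monotonicity and keeps the relevant partials below $1$, is the technical crux of the proof.
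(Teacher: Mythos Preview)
Your approach is essentially the paper's: both directions pass through $\rho<\rho_c \Leftrightarrow M<d$ via \thref{cor:rho_d=rho_c} and \thref{prop:M_cont}, and then analyze the truncation $\hat g_m$. For $(\Leftarrow)$ the paper asserts in one line that $K[b_i,\ldots,b_m]>1$ forces $\hat g_m$ to have a singularity of modulus strictly less than $d$; your route through the shifted generating function $\tilde g=K[1,a_iz,\ldots,a_mz]$ and the weight comparison supplies a justification for exactly that step (indeed, comparing the shifted coefficients to those of $\hat g_m$ itself rather than $\hat g_{m-i}$ works too, since $a_{i+k}\le a_k$ gives $\hat C_k\ge$ the shifted Catalan numbers term by term, recovering the paper's assertion verbatim).

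For $(\Rightarrow)$ both arguments pick the maximal index $i^*$ and use monotonicity of the partial tails. You take $i^*$ maximal over $(0,d]$ and then worry about the boundary case $z'=d$; the paper takes $i^*$ maximal over $(0,d)$ strictly, which makes $x_0<d$ automatic but leaves open the analogous issue of whether $V_m^{(j)}(d)=1$ for some $j>i^*$ (needed for continuity of $V_m^{(i^*)}$ at $d$). The paper does not address this; your instinct to enlarge $m$ is correct and can be made rigorous, since the partial tails are strictly increasing in $m$ on the range where they lie in $[0,1)$, so an exact equality $V_m^{(j)}(d)=1$ becomes strict for $m+1$ or shifts to a higher index.
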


\begin{proof}
Suppose that $\rho < \rho_c$. \thref{prop:M_cont} and \thref{cor:rho_d=rho_c} imply that $M < d$. By \eqref{eq:M_lim}, we have $\hat M < d$ for a large enough choice of $m$. Since $\hat g_m(x)$ is a rational function, its singularities occur wherever one of the partial denominators $1 - K[a_i x, \hdots, a_m x] = 0$. Let $i^*$ be the largest index such that there is $x_0 <d$ with 
$$1- K[a_{i^*} x_0, \hdots,a_m x_0] = 0.$$
Since $i^*$ is the maximum index for which this holds, we have $K[a_{j}x,\hdots, a_mx] <1$ for all $j > i^*$ and $x \in (x_0,d)$. 
This ensures that $K[a_{i^*} x, \hdots,a_m x]$ is a strictly increasing function for $x \in (x_0,d)$. Thus, $K[b_{i^*},\hdots, b_m] >1$. 

Suppose that $K[b_i,\hdots,b_m] >1$ for some $m\geq 1$ and $0 \leq i \leq m$. This implies that $\hat g_m$ has a singularity of modulus strictly less than $d$. Thus, $\hat M < d$. Since $M \leq \hat M$, \thref{prop:M<} implies that $\rho < \rho_c$. 
\end{proof}

\subsection{An upper bound on $C_k^{\lambda, \rho}$}
Since $u(j)$ and $v(j)$ are decreasing in $j$, we obtain an upper bound by assigning weight $u(m)$ and $v(m)$ to all rise and fall steps above height $m$. More precisely, set
 \begin{align}\tilde u(j) =  \begin{cases} u(j),& j < m \\ u(m), &j \geq m
 		\end{cases}, \qquad \tilde v(j) =  \begin{cases} v(j), &j < m \\ v(m),& j \geq m
 		\end{cases}. \label{eq:tilde_uv}	
 \end{align}

 Let $\tilde C_{k}^{\lambda ,\rho}$ be the weighted Catalan number using the weights $\tilde u$ and $\tilde v$. Also, let $\tilde g$ and $\tilde M$ be the corresponding generating function and radius of convergence. Since $u(j) \leq \tilde u(j)$ and $v(j) \leq \tilde v(j)$ for all $j \geq 0$, we have $C_{k}^{\lambda,\rho} \leq \tilde C_{k}^{\lambda ,\rho}$
for all $k\geq 0$. It follows that the corresponding generating functions and radii of convergence satisfy
$g(x) \leq \tilde g(x),$
and $M \geq \tilde M$.

There is a finite procedure for bounding $\tilde M$. We say that $K[1,c_0,c_1,\hdots,c_k]$ is \emph{good} if all of the partial continued fractions are smaller than $1$: 
\begin{align}
c_k &<1 \label{eq:good_test} , \quad  K[c_{k-1},c_k]  <1  ,  \quad \cdots \quad  , \quad K[c_0,\hdots, c_k]  <1.
\end{align}
 Define the continued fraction
	\begin{align}
		K_m(x):=K[1,a_0x,\hdots, a_{m-2}x, a_{m-1}x \psi(a_m x)] \label{eq:Kx}
	\end{align}
	where $\psi(x) = K[1,x,x,\hdots]$ is the generating function of the usual Catalan numbers.
By \eqref{eq:f}, so long as $x < \tilde M$ we have 
$$\tilde g(x) = K_m(x).$$

We now prove that when a partial continued fraction is good, it is good in a neighborhood. 

\begin{lemma}\thlabel{lem:good}
If $K[\mathbf{c}] := K[c_0, c_1, \dots, c_k]$ is good, then there exists an $\epsilon > 0$ such that $K[ \mathbf{\tilde{c}}] := K[\tilde{c}_0, \tilde{c}_1, \dots, \tilde{c}_k]$ is good when $\tilde{c}_j \leq c_j(1+\varepsilon)$ for all $j = 0, \dots, k$. 
\end{lemma}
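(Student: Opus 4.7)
The plan is to split the lemma into a continuity step and a monotonicity step, using the recursive structure of continued fractions. To fix notation, introduce the partial continued fractions $F_j(\mathbf c) := K[c_j, c_{j+1}, \ldots, c_k]$ for $0 \le j \le k$, with base case $F_k(\mathbf c) = c_k$, which satisfy the recursion $F_j(\mathbf c) = c_j/(1 - F_{j+1}(\mathbf c))$. In this notation, goodness of $\mathbf c$ is precisely the statement that $F_j(\mathbf c) < 1$ for every $j$, which keeps each denominator $1 - F_{j+1}(\mathbf c)$ strictly positive. Consequently each $F_j$ is a rational function of $(c_j, \ldots, c_k)$ whose denominator does not vanish at $\mathbf c$, hence is continuous on an open neighborhood of $\mathbf c$.

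The heart of the argument would be a monotonicity lemma: if $\mathbf c$ is good and $\mathbf{\tilde c}$ satisfies $0 \le \tilde c_i \le c_i$ coordinatewise, then $\mathbf{\tilde c}$ is good and $F_j(\mathbf{\tilde c}) \le F_j(\mathbf c)$ for every $j$. I would prove this by downward induction on $j$. The base case $j = k$ is immediate since $F_k(\mathbf{\tilde c}) = \tilde c_k \le c_k < 1$. For the inductive step, the hypothesis $F_{j+1}(\mathbf{\tilde c}) \le F_{j+1}(\mathbf c) < 1$ gives $1 - F_{j+1}(\mathbf{\tilde c}) \ge 1 - F_{j+1}(\mathbf c) > 0$, so $F_j(\mathbf{\tilde c}) = \tilde c_j/(1 - F_{j+1}(\mathbf{\tilde c})) \le c_j/(1 - F_{j+1}(\mathbf c)) = F_j(\mathbf c) < 1$.

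Having this in hand, the conclusion follows by continuity. For each fixed $j$, the map $\varepsilon \mapsto F_j((1+\varepsilon)\mathbf c)$ is continuous at $\varepsilon = 0$ with value $F_j(\mathbf c) < 1$, so there is some $\varepsilon_j > 0$ on which this map stays strictly below $1$. Taking $\varepsilon := \min_{0 \le j \le k} \varepsilon_j > 0$ makes $(1+\varepsilon)\mathbf c$ good. Then, applying the monotonicity lemma with $(1+\varepsilon)\mathbf c$ in place of $\mathbf c$, any $\mathbf{\tilde c}$ with $0 \le \tilde c_j \le (1+\varepsilon)c_j$ is automatically good, which is the desired conclusion.

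There is no real obstacle here. The only conceptual choice worth justifying is precisely the split into an outward continuity step that moves $\mathbf c$ to $(1+\varepsilon)\mathbf c$ followed by a monotonicity sweep that covers all smaller tuples. This avoids the slightly awkward task of proving continuity directly on the whole box $\prod_j [0, (1+\varepsilon)c_j]$, whose lower corner can be far from $\mathbf c$.
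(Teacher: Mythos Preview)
Your proof is correct and follows the same approach as the paper: establish monotonicity of the partial continued fractions in the entries $c_j$, then use continuity to push out to $(1+\varepsilon)\mathbf c$. The paper's version is terser---it simply asserts the monotonicity and continuity without the explicit downward induction or the $\varepsilon_j$ bookkeeping---but the structure is identical.
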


\begin{proof}
If $K[ \mathbf{c}]$ is good, then each partial fraction in \eqref{eq:good_test} is $<1$. Note that each of those partial fractions is a decreasing function of the $c_j$ which appear in the fraction. Therefore, if $\tilde{c}_j \leq c_j$, $K[ \mathbf{\tilde{c}}]$ is good. Since the inequalities are strict and $K[\mathbf c]$ is continuous in each $c_j$, we can extend to have $K[\mathbf{ \tilde{c}}]$ good for all $\tilde{c}_j \leq (1+\epsilon)c_j$ where $\epsilon$ is chosen small enough so that none of the partial fractions in \eqref{eq:good_test} equal 1. 
\end{proof}

\begin{lemma} \thlabel{lem:>iff}
$\rho > \rho_c$ if and only if $b_m < 1/4$ and $K_m(d)$ is good for some $m \geq 1$. 
\end{lemma}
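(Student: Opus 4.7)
The strategy is to link the condition in the lemma to the inequality $\tilde M > d$ for the radius of convergence of the upper-bound generating function $\tilde g$ (which depends on $m$), and then invoke the equivalence $\rho > \rho_c \iff M > d$ (from \thref{cor:rho_d=rho_c} and \thref{prop:M_cont}) together with the comparison $M \ge \tilde M$.

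For the $\Leftarrow$ direction, suppose $b_m < 1/4$ and $K_m(d)$ is good. Since $a_m d < 1/4$, $\psi(a_m x)$ is analytic at $x = d$, so the last entry of $K_m(x)$ is continuous at $x = d$. Applying \thref{lem:good} to the entries of $K_m(d)$ yields $\epsilon > 0$ such that $K_m(x)$ remains good on $[0, d(1+\epsilon)]$; in particular no partial denominator vanishes there, so $K_m$ is analytic on this interval. Since $\tilde g(x) = K_m(x)$ for $x < \tilde M$, this provides an analytic continuation of $\tilde g$ past $d$. Pringsheim's theorem (\thref{thm:pringsheim}) says that $\tilde M$ is a singularity of $\tilde g$, so we must have $\tilde M \geq d(1+\epsilon) > d$. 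Hence $M \ge \tilde M > d$ and $\rho > \rho_c$.

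For the $\Rightarrow$ direction, assume $\rho > \rho_c$, i.e., $M > d$, and in particular $g(d) < \infty$. By the argument in the proof of \thref{lem:K_mono} applied at $x = d$ (where everything is analytic since $d < M$), each infinite tail $K[a_i d, a_{i+1} d, \ldots]$ is strictly less than $1$: if any tail equaled or exceeded $1$, then the outer partial fraction $a_{i-1} d/(1 - \text{tail})$ would be non-positive or infinite, propagating outward to contradict $g(d) \in (1, \infty)$. Since $a_j d \to 0$, choose $m$ large enough that $a_m d < 1/4$, giving $b_m < 1/4$. Worpitzky's theorem (\thref{thm:worpitzky}) provides a uniform tail bound (e.g., $K[c_0, c_1, \ldots] \le 1/2$ whenever $0 \le c_j \le 1/4$), controlling the deep partial fractions inside $K_m(d)$; meanwhile, for each fixed $i$ the finite truncation $K[a_i d, \ldots, a_{m-1} d\,\psi(a_m d)]$ converges as $m \to \infty$ to the full tail $K[a_i d, a_{i+1} d, \ldots] < 1$, so the finitely many shallow partial fractions are eventually $< 1$. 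Taking $m$ sufficiently large makes every partial continued fraction inside $K_m(d)$ strictly less than $1$, so $K_m(d)$ is good.

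The main obstacle is the $\Rightarrow$ direction: strict inequalities from the infinite tails must be inherited through finite truncations, simultaneously across all levels of $K_m(d)$. Worpitzky handles the deep levels uniformly once $a_j d < 1/4$, while the finitely many shallow levels are handled by individual convergence of the truncations to their full-tail limits. The $\Leftarrow$ direction is comparatively routine once \thref{lem:good} is in place.
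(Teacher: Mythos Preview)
Your proposal is correct. The $\Leftarrow$ direction matches the paper's argument essentially verbatim. For $\Rightarrow$, both you and the paper begin by establishing that every infinite tail $K[b_i,b_{i+1},\ldots]$ is strictly below $1$ when $d<M$, but you then diverge in how goodness is transferred to $K_m(d)$. The paper writes $g(d)=K[1,b_0,\ldots,b_{m-2},b_{m-1}g_m(d)]$, observes this is good, applies \thref{lem:good} to obtain slack $\epsilon$ in the last entry, and then chooses $m$ large enough that $\psi(b_m)<1+\epsilon\le(1+\epsilon)g_m(d)$, so replacing $g_m(d)$ by $\psi(b_m)$ preserves goodness. You instead argue that each partial fraction of $K_m(d)$ converges, as $m\to\infty$, to the corresponding infinite tail: Worpitzky handles all levels beyond the threshold where $b_j<1/4$ uniformly, and the finitely many shallow levels are handled by continuity. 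The paper's route is a bit more economical (only the bottom entry needs control), while yours makes the deep/shallow split explicit and sidesteps a mild circularity in the paper's phrasing (where $\epsilon$ is extracted for a fixed $m$ and then $m$ is chosen large).

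One small correction: your parenthetical ``propagating outward'' justification for strictness of the infinite tails does not work as written---if a tail exceeds $1$, the next level is negative, but the level above that becomes positive again and need not be pathological. The clean argument (which the paper uses) is continuity in $x$: each tail starts at $0$ when $x=0$, so if it were $\ge 1$ at $x=d$ it would equal $1$ at some $x_0\le d$, forcing a pole of $g$ inside $|z|\le d<M$. Your citation of \thref{lem:K_mono}, combined with strict monotonicity of the tails as power series with positive coefficients, is also a valid route to the same conclusion.
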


\begin{proof}
Suppose that $\rho > \rho_c$. Then \thref{prop:M_cont} and \thref{cor:rho_d=rho_c} say that $g$ has radius of convergence $M > d$. For $|z|<M$, we write 
\begin{align}
g(z) = K[1,a_0z , \hdots, a_{m-2} z, a_{m-1} z g_m(z)] \label{eq:good_g}
\end{align}
with $g_m(z):= K[1,a_m z, a_{m+1}z,\hdots]$
the tail of the continued fraction.

Because $M > d$, $g_m(d) < \infty$, we can see, using a quick argument by contradiction, that $g(d)$ is good. Suppose that one of the partial fractions in \eqref{eq:good_test} is strictly larger than $1$. But because these finite continued fractions are continuous in the inputs, this would imply that there is a singularity at some $|z| < d$, contradicting the fact that $M > d$. 

As the continued fraction representation of $g(d)$ is good, we then apply \thref{lem:good} to say that there exists an $\epsilon > 0$ such that 
\begin{align}
K[1,b_0 , \hdots, b_{m-2}, b_{m-1} g_m(d) (1+\epsilon)] \label{eq:gooder}
\end{align}
is also good.
Notice that $\psi(a_m x) \geq g_m(x) \geq 1$ by definition. Since $|a_{m}z| \to 0$ as $m \to \infty$, we can use the explicit formula for $\psi$ to directly verify that $\psi(a_m x) \to 1$ as $m \to \infty$. Choose $m$ large enough so that $b_m < 1/4$ and $\psi(b_m) < 1 + \epsilon$. Then $ \psi(b_m) < (1+\epsilon)g_m(d)$ and it follows from \eqref{eq:gooder} that $K_m(d)$ is good.

Now, suppose that $b_m < 1/4$ and $K_m(d)$ is good for some $m \geq 1$. Our definitions of $\tilde u$ and $\tilde v$ ensure that we can write $$K[1,\tilde a_m x, \tilde a_{m+1} x,\hdots] = K[1, a_m x, a_mx, \hdots]= \psi(a_m x)$$ for all $x$ with $a_m x < 1/4.$ 
Since $b_m = a_m d < 1/4$, this ensures that this is true for all $x < d(1+\epsilon')$ for some $\epsilon' >0$. 
Similar reasoning as \thref{lem:meromorphic} then gives $K_z$ is a meromorphic function for $|z| \leq d(1+\epsilon')$. 
Moreover, \thref{thm:pringsheim} ensures that the first pole occurs at the smallest $x$ for which some partial continued fraction of $K_m$ is equal to $1$. 
By \thref{lem:good}, $K_m(d)$ being good implies that there exists $0< \epsilon\leq \epsilon'$ such that $K_m$ is good for all $x \leq d(1+\epsilon)$. 
Hence, there are no poles of $K_m$ within distance $d(1+\epsilon)$ of the origin.
 It follows that $\tilde g = K_m$ for all such $x$, and thus $\tilde M \geq d(1+\epsilon)$. Since $M \geq \tilde M$, we have $M > d$. This gives $\rho > \rho_c$ by \thref{prop:M>} and \thref{cor:rho_d=rho_c}.

\end{proof}

\subsection{A finite runtime algorithm}

\begin{proof}[Proof of \thref{cor:algorithm}]
Suppose we are given $\rho \neq \rho_c$. Lemmas \ref{lem:<iff} and \ref{lem:>iff} give a finite set of conditions to check whether $\rho < \rho_c$ or $\rho > \rho_c$, respectively. To decide which is true, we increase $m$ and the algorithm terminates once the conditions holds. 
\end{proof}

\subsection*{Acknowledgements}
Thanks to David Sivakoff and Joshua Cruz for helpful advice and feedback. We are also grateful to Sam Francis Hopkins for pointing us to a reference about weighted Catalan numbers. Feedback during the review process greatly improved the final version. This work was partially supported by NSF DMS Grant \#1641020 and was initiated during the  2019 AMS Mathematical Research Community in Stochastic Spatial Systems.

\bibliographystyle{alpha}
\bibliography{cewd}

\end{document}